\newenvironment{remark}[1][]{
  \vspace*{3mm}
  \noindent\textbf{Remark.}
} {

  \hfill $\diamond$

}
\newcommand{\distributionSpace}{\mathcal D'}
\newcommand{\detGradPhi}{{\mathcal J}}
\newcommand{\rieszIso}{{\mathcal R}}
\newcommand{\RieszIsoAlt}{\rieszIso^{-1}_V}
\newcommand{\Cstate}{C_{\!\mbox{\tiny state}}}
\newcommand{\Cadjoint}{(\Cstate^4+\Cstate^2)}
\newcommand{\compactVspace}{\mathcal V}
\newcommand{\errorL}{\eps^r + \frac{h^m}{\eps^m}}
\newcommand{\errorH}{\eps^r + \frac{h^m}{\eps^{m+1}}}
\newcommand{\PhiSpace}{P}
\newcommand{\PhiSpaceL}{{P_L}}
\newcommand{\minCont}{$(\mathcal P)\,$}
\newcommand{\minDiscr}{$(\mathcal P_h)\,$}
\newcommand{\locFunc}{\chi_{c}}
\newcommand{\calB}{{\mathcal B}}
\newcommand{\frechet}{Fr\'echet }
\DeclareMathOperator*{\supp}{supp}
\DeclareMathOperator*{\projection}{Proj}
\newtheorem{Definition}{Definition}[section]
\newtheorem{Lemma}[Definition]{Lemma}
\newtheorem{Theorem}[Definition]{Theorem}
\newtheorem{Assumption}[Definition]{Assumption}
\renewcommand{\phi}{\varphi}
\newcommand{\eps}{\varepsilon}
\renewcommand{\epsilon}{\varepsilon}
\renewcommand{\Xi}{\varXi}
\newcommand{\<}{\langle}
\renewcommand{\>}{\rangle}
\renewcommand*{\rho}{\varrho}
\newcommand{\linFuncSpace}{\mathcal L}	
\title{Optimal Control for Burgers Equation using Particle Methods}
\author{Jan Marburger} 
\address{Jan Marburger, Fraunhofer ITWM Kaiserslautern, {\tt{marburger@research.nit-service.de}}}
\author{Ren\'e Pinnau}
\address{Ren\'e Pinnau, TU Kaiserslautern, FB Mathematik, 
{\tt{pinnau@mathematik.uni-kl.de}}}
\date{\today}
\begin{document}

\bibliographystyle{plain} 

\begin{abstract}
 This papers shows the convergence of optimal control problems where the constraint function is discretised by a particle method.
 In particular, we investigate the viscous Burgers equation in the whole space $\mathbb R$ by using distributional particle approximations. The continuous optimisation problem is derived and investigated. Then, the discretisation of the state constraint and the resulting adjoint equation is performed and convergence rates are derived. Moreover, the existence of a converging subsequence of control functions, obtained by the discrete control problem, is shown. Finally, the derived rates are verified numerically.
\end{abstract} 

\maketitle

\noindent{\bf Keywords.}
Mesh-less methods, particle methods, Eulerian-Lagrangian formulation, 
adjoint method, viscous Burgers equation, error estimation

\vspace{2mm}
\noindent{\bf Subject (MSC).} 49-99, 65K10, 76M28
\vspace{2mm}
 
\section{Introduction}
Finite element and finite volume methods enjoy limited use in deforming
domain and free surface applications due to exorbitant computational demands.
However, particle methods are ideally suitable for simulating those kind of problems \cite{KuhnertSurfTension}.
Over the last thirty years different approaches to these methods were developed.

In classical particle methods, see e.g. \cite{Raviart,Raviart2}, a function is
approximated in a distributional sense, i.e. by Dirac measures as shown in the following
definition.
\begin{Definition}
 \label{def:distrPartApprox}
 A (distributional) particle approximation of a function $\phi:\mathbb R\to\mathbb R$ is
 denoted by
  \[ \phi(x) \simeq \Pi^h \phi := \sum_{i=1}^N h_i \phi(x_i)\delta(x-x_i) \]
 where $h_i$ are quadrature weights.
\end{Definition}
Obviously, the approximation by Dirac measures satisfies
 \[ \int_\Omega \phi(x) \,dx \simeq \int_\Omega \Pi_h \phi \,d\mu(x) 
    = \sum_{i=1}^N h_i \phi(x_i)\]
with an appropriate measure $\mu$.

``Smoothed particle hydrodynamics'' (SPH) \cite{monaghan05, monaghan77, LiLiu}, uses the approximation of the Dirac distribution by
a continuous function with an appropriate choice of the smoothing parameter $\eps$, which is called a Dirac sequence.
Moreover, the particles are equipped by a mass, which allows a good physical interpretation of this method.
This method uses a strong formulation of the equation system.

In contrast, meshless Galerkin and partition of unity methods \cite{schweitzerDipl} use the weak formulation for solving
a system of differential equations. These methods approximate the solution space by basis functions. This basis is
obtained by e.g. RKPM (Reproducing Kernel Particle Method) or MLS (Moving Least Squares) \cite{LiLiu}. Note that this is
a generalisation of finite elements.

Another mesh-less approach is the Finite Pointset Method (FPM). It is similar to the
classical particle method described in the very beginning. The approximation operators,
like gradient or Laplacian, are obtained by finite differences, in particular by applying
a least squares approach.
This method also uses the strong version of a differential equation. For details see
\cite{KuhnertDiss,Kuhnert1, KuhnertSurfTension}.

In this paper we consider the analytical aspects of optimisation using the first
method, i.e. distributional particle approximation. The optimisation, performed by using
the Lagrangian technique, is exemplified on a problem subject to the viscous Burgers equation, i.e.
minimise the cost functional $J(y,u)$ subject to
\begin{align*}
 \begin{aligned}
  \partial_t y + y\partial_x y - \nu \partial_{xx}  y &= u 
   &&\text{in }\mathbb R\times(0,T)\\
   y(x,0) &= y_0(x) &&\text{in }\mathbb R
 \end{aligned}
\end{align*}
where $T>0$ denotes the final time and $\nu>0$ the viscosity.

First results and strategies on
applying optimisation to particle methods are shown in \cite{JM_NM_RP_Basics,JMpamm09}.
The application to optimisation of free surface problems is investigated in \cite{JMparticles09,JMpamm10}.

We start with deriving the adjoint equation and the gradient of the continuous problem. Moreover, we prove existence and uniqueness of the adjoint and show the existence of a solution to the minimisation problem.
Then, the state and adjoint equation is discretised by a particle method. Again, we investigate the resulting system and show the existence of a optimal control for the discrete optimisation problem.
Finally, we show the existence of a converging subsequence of controls obtained by the discrete optimiation to the analytical solution and confirm the derived results numerically.

Throughout this paper, we use the following notation. The sets $V:=H^1(\mathbb R)$,
$H:=L^2(\mathbb R)$, $L^2(V) := L^2(0,T;V)$ and 
$W(V) := \{ y\in L^2(V) \,|\, \partial_t y \in L^2(V^*) \}$ are Hilbert-spaces.
For these spaces the embedding $V \hookrightarrow H = H^* \hookrightarrow V^*$ holds.
The space $\compactVspace\subset V$ denotes the space of functions with bounded support, i.e. $y\in \compactVspace \Rightarrow \overline{\supp y} \subseteq [a,b]$, $\infty<a\le b<\infty$. Moreover, we denote the Riesz isomorphism by $\rieszIso_V:V\to V^*$.

Among others, we use the following relations.
The estimation
\begin{align*}
 2ab \le \eps a^2 + \eps^{-1} b^2
 \qquad\mbox{for } a,b\in \mathbb R,\;\eps > 0
\end{align*}
is called \emph{Young's inequality} \cite{Alt}. To estimate the $V^*$ norm, we use
\begin{align*}
 \|\phi\|_{V^*}^2 = \<\RieszIsoAlt \phi, \RieszIsoAlt \phi\>_{V}
  = \<\rieszIso_V\RieszIsoAlt \phi, \RieszIsoAlt \phi\>_{V^*,V}
  = \<\phi, \RieszIsoAlt \phi\>_{V^*,V}
\end{align*}
for $\phi\in V^*$, which is a direct consequence of the Riesz representation theorem.
For the estimation of the time dependency, we state \emph{Gronwall's lemma} which reads
for $u:[0,T]\to\mathbb R$
\begin{align*}
 u(T) \le c_1 + \int_0^T g(\tau) u(\tau)\,d\tau 
 \qquad\Rightarrow\qquad
 u(T) \le c_1 \exp \Big( \int_0^T g(\tau)\,d\tau \Big)
\end{align*}
where $c_1\in\mathbb R$ is a constant and $g:[0,T]\to\mathbb R_0^+$. 

\nocite{casas06,volkwein-basics, Troel}

\section{Optimal Control Problem}
\label{sec:contOptimisation}
We define the set of admissible control functions by
\[ U_{ad} := \{ u\in H^1(0,T) \,|\, u_l \le u(t) \le u_u \;a.e. \} \]
for a fixed final time $T>0$ and $-\infty < u_l \le u_u <\infty$.
We consider the following minimisation problem: Minimise
\begin{align*}
 J(y,u) := \frac12 \| y(T)-y_d \|_{H}^2 + \frac\sigma2 \| u \|_{H^1(0,T)}^2
\end{align*}
over $(y,u) \in W(V)\times U_{ad}$
subject to the viscous Burgers equation
\begin{align}
\label{equ:state}
 \begin{aligned}
  \partial_t y + y\partial_x y - \nu \partial_{xx}  y &= \locFunc u 
   &&\text{in }\mathbb R\times(0,T)\\
   y(x,0) &= y_0(x) &&\text{in }\mathbb R
 \end{aligned}
\end{align}
where $\nu>0$ denotes the viscosity, $\locFunc\in C^\infty_0(\mathbb R)$ a spatial 
localisation function with bounded support and $y_d\in H$ the desired state at final time $T$. Both functions 
are supposed to have compact support.

The weak formulation of \eqref{equ:state} is given by
\begin{align}
 \label{equ:weakState}
 \begin{aligned}
 \frac{d}{dt}\<y(t),\phi\>_H + \<y(t)\partial_x y(t),\phi\>_H
  + \nu\<\partial_x y(t),\partial_x \phi\>_H &= \<\calB u(t),\phi\>_H \\
 \<y(0),\psi\>_H &= \<y_0,\psi\>_H
 \end{aligned}
\end{align}
for all $\phi\in V, \psi\in H$ and a.e. $t\in(0,T)$. 
Here $\calB:H^1(0,T)\to L^2(V^*)$ is defined by
$(\calB u)(x,t) = \locFunc(x) u(t)$.
Hence, the following relation holds
\begin{align*}
 \<\calB u, \phi\>_{L^2(V^*),L^2(V)} = \int_0^T u(t) \int_\mathbb R \locFunc(x) \phi(x,t) \,dx\,dt
 \le C \|u\|_{L^2(0,T)} \|\phi\|_{L^2(H)}
\end{align*}
where $C=\|\chi_c\|_{L^\infty}$.
In the following we state the existence of a unique solution to \eqref{equ:weakState} and show its boundedness.
\begin{Theorem}
 \label{thm:existenceForward}
 Let $y_0\in \compactVspace$ and $u\in H^1(0,T)$. Then there exists a unique weak solution $y\in W(V)\cap L^\infty(0,T;H^2(\mathbb R))$
 to the viscous Burgers equation \eqref{equ:weakState}. Moreover, $\partial_t y \in L^2(0,T;H^1(\mathbb R))$.
\end{Theorem}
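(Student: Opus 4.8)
The plan is a Galerkin approximation combined with a hierarchy of energy estimates, a compactness argument for the limit passage, and a Gronwall-based uniqueness proof. First I would fix a countable family $\{w_k\}_{k\in\mathbb N}$ of smooth, compactly supported functions whose span is dense in $V$, seek $y_n(t)=\sum_{k=1}^n g_k^n(t)\,w_k$, and project the weak formulation \eqref{equ:weakState} onto $\MatSpan\{w_1,\dots,w_n\}$. This produces a system of ODEs for the coefficients $g_k^n$ with a quadratic (hence locally Lipschitz) nonlinearity, so Picard--Lindel\"of gives a local solution $y_n$, which the uniform bounds below extend to all of $[0,T]$.

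Next I would derive a priori estimates, uniform in $n$. The basic $L^2$-bound follows by testing with $y_n$ and using the cancellation $\langle y\,\partial_x y, y\rangle_H=\tfrac13\int_{\mathbb R}\partial_x(y^3)\,dx=0$, which eliminates the convection term; Young's inequality and Gronwall's lemma then give a bound in $L^\infty(0,T;H)\cap L^2(V)$. For the $H^1$-bound I would test with $-\partial_{xx}y_n$ and for the $H^2$-bound with $\partial_{xxxx}y_n$ (equivalently, differentiate the equation and test with $\partial_{xx}y_n$). Here the convection term no longer cancels; using the one-dimensional embedding $H^1(\mathbb R)\hookrightarrow L^\infty(\mathbb R)$ together with Gagliardo--Nirenberg interpolation, the resulting terms can be bounded by a small multiple of the dissipation $\nu\|\partial_{xx}y_n\|_H^2$ (respectively of the $H^3$-seminorm) plus lower-order quantities, so that the viscous term absorbs them. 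Gronwall then yields uniform bounds in $L^\infty(0,T;H^2)\cap L^2(0,T;H^3)$. Reading $\partial_t y_n=\nu\partial_{xx}y_n-y_n\partial_x y_n+\locFunc u$ off the equation and noting that $y_n\partial_x y_n$ is then bounded in $L^2(0,T;H^1)$ (its derivative $(\partial_x y_n)^2+y_n\partial_{xx}y_n$ lies in $L^2$) gives $\partial_t y_n$ bounded in $L^2(0,T;H^1)$.

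With these bounds I would pass to the limit: extracting a weakly-$*$ convergent subsequence and invoking the Aubin--Lions lemma yields strong convergence of $y_n$ in $L^2(0,T;L^2_{\mathrm{loc}})$, the compact support of the data and of $\locFunc$ helping to control the tails at infinity. This strong convergence is precisely what lets one pass to the limit in the nonlinear term, so that the limit $y$ satisfies \eqref{equ:weakState}, while the claimed regularity follows from weak lower semicontinuity against the uniform bounds. For uniqueness, set $w=y_1-y_2$ for two solutions and split $y_1\partial_x y_1-y_2\partial_x y_2=w\,\partial_x y_1+y_2\,\partial_x w$; testing the difference equation with $w$, the term $\langle y_2\partial_x w,w\rangle_H$ vanishes by the same antisymmetry, and $\langle w\,\partial_x y_1,w\rangle_H$ is controlled via $\|\partial_x y_1\|_{L^\infty}\lesssim\|y_1\|_{H^2}$, which is finite by the established regularity. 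Young and Gronwall then force $w\equiv0$.

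The main obstacle throughout is the convection term $y\partial_x y$: it prevents the higher-order estimates from closing automatically (forcing the use of interpolation and absorption into the viscosity), it is why strong rather than merely weak convergence is needed in the limit, and it is the one nontrivial term in the uniqueness argument. The two features that make it tractable are the conservation identity $\langle y\partial_x y,y\rangle_H=0$ and the favourable one-dimensional Sobolev embeddings.
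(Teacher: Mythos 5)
Your Galerkin strategy is necessarily a different route from the paper's, because the paper does not actually prove this theorem: its proof is a two-line citation of \cite{existBurger2}, plus the observation that the data here ($y_0$ of bounded support, $\locFunc\in C_0^\infty(\mathbb R)$, $u\in H^1(0,T)\hookrightarrow C^0$) fit that setting. A self-contained argument along your lines --- Galerkin approximation, an energy hierarchy, Aubin--Lions compactness, Gronwall uniqueness --- is the standard architecture, and the low-order part of your proposal (the $L^\infty(H)\cap L^2(V)$ bound via $\langle y\partial_x y,y\rangle_H=0$, the compactness and limit passage, the use of the one-dimensional embeddings) is sound.

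However, three steps do not hold as written. (i) Your $H^2$-level estimate has no starting value: the theorem assumes only $y_0\in\compactVspace\subset H^1(\mathbb R)$, so $\|y_n(0)\|_{H^2}$ is not bounded, and the Gronwall argument you run after testing with $\partial_{xxxx}y_n$ cannot yield a uniform $L^\infty(0,T;H^2)$ bound. This is not a technicality: already for the heat semigroup one has $\|e^{t\partial_{xx}}y_0\|_{H^2}\sim t^{-1/2}\|y_0\|_{H^1}$ as $t\to 0$ when $y_0\notin H^2$, so controlling the $H^2$ norm up to $t=0$ requires either $y_0\in H^2$ or a $t$-weighted parabolic smoothing estimate, which gives boundedness only away from $t=0$. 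Since both stated conclusions ($y\in L^\infty(0,T;H^2)$ and $\partial_t y\in L^2(0,T;H^1)$) hinge on this level of the hierarchy, your plan does not deliver them; this initial-layer issue is precisely what the paper hides inside the citation. (ii) In the uniqueness step, $\langle y_2\partial_x w,w\rangle_H$ does \emph{not} vanish: integration by parts gives $\langle y_2\partial_x w,w\rangle_H=-\tfrac12\langle \partial_x y_2,w^2\rangle_H$, and the antisymmetry you invoke requires all three entries to be the same function (or a divergence-free advecting field, which has no meaning for scalar 1D Burgers). The step is easily repaired --- bound the term by $\|y_2\|_{L^\infty}\|\partial_x w\|_H\|w\|_H$ and absorb it via Young and Gronwall --- but as stated it is false. (iii) With a Galerkin basis of smooth compactly supported functions, $-\partial_{xx}y_n$ and $\partial_{xxxx}y_n$ are not elements of $\MatSpan\{w_1,\dots,w_n\}$, hence not admissible test functions for the projected equation; likewise $\partial_t y_n$ equals the \emph{projection} of the right-hand side, so reading an $L^2(H^1)$ bound for it off the equation requires that projection to be uniformly bounded on $H^1$. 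Both points are fixed by taking the Galerkin spaces to be Fourier frequency cutoffs (which commute with $\partial_x$) rather than spans of generic compactly supported functions.
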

\begin{proof}
 For the existence and uniqueness we refer to \cite{existBurger2}.
 As the initial value $y_0$ has bounded support and the right hand side is sufficiently smooth, in particular $\chi\in C^\infty_0(\mathbb R)$ and $u\in C^0(0,T)$, the proof in \cite{existBurger2} is also valid for \eqref{equ:weakState}.
\end{proof}

\begin{Theorem}
 \label{thm:boundedForward}
 Let $u\in H^1(0,T)$, $y_0\in \compactVspace$ and $y\in W(V)\cap L^\infty(V)$ be the solution to the viscous Burgers equation \eqref{equ:weakState}.
 There exists a constant $C>0$, only depending on $\nu$, $T$, and $\locFunc$, such that
 \begin{align*}
  \|y\|_{L^\infty(V)} + \|y\|_{W(V)}\le  C( \|u\|_{L^2(0,T)} + \|y_0\|_V )^2 \exp( C(\|u\|_{L^2(0,T)}^2 + \|y_0\|_H^2) ) =: \Cstate
 \end{align*}
 holds.
\end{Theorem}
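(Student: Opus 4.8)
The plan is to establish the a priori bound by testing the weak formulation with suitable test functions and applying Gronwall's lemma, working out the $H$-norm, the $V$-norm, and the $\partial_t y$ bound in turn.

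The plan is to derive the bound by successively testing the weak formulation \eqref{equ:weakState} against $y$ and against $-\partial_{xx}y$, reading the $\partial_t y$ estimate off the equation itself, and invoking Gronwall's lemma at each stage. First I would test with $\phi=y(t)$. The decisive point is that the transport term vanishes, since $\langle y(t)\partial_x y(t),y(t)\rangle_H=\int_{\mathbb R}\tfrac13\partial_x(y^3)\,dx=0$ by the decay of $y\in H^2(\mathbb R)$ guaranteed by Theorem \ref{thm:existenceForward}. Estimating the source by $\langle\calB u,y\rangle_H\le\tfrac12|u(t)|^2\|\locFunc\|_H^2+\tfrac12\|y\|_H^2$ with Young's inequality and integrating in time, Gronwall's lemma bounds $\|y(t)\|_H^2$, and thereby $\nu\|\partial_x y\|_{L^2(H)}^2$, solely in terms of $\|y_0\|_H$ and $\|u\|_{L^2(0,T)}$; this already controls $\|y\|_{L^2(V)}$.

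Next I would test with $\phi=-\partial_{xx}y(t)$, which is admissible thanks to the regularity $y\in L^\infty(0,T;H^2(\mathbb R))$ and $\partial_t y\in L^2(0,T;H^1(\mathbb R))$ from Theorem \ref{thm:existenceForward}. Integration by parts turns the time term into $\tfrac{d}{dt}\tfrac12\|\partial_x y\|_H^2$ and the viscous term into $\nu\|\partial_{xx}y\|_H^2$. The nonlinearity is estimated by $|\langle y\partial_x y,\partial_{xx}y\rangle_H|\le\|y\|_{L^\infty}\|\partial_x y\|_H\|\partial_{xx}y\|_H$, and the one-dimensional embedding $V\hookrightarrow L^\infty(\mathbb R)$ with Young's inequality splits it into $\tfrac{\nu}{4}\|\partial_{xx}y\|_H^2+\tfrac{C}{\nu}\|y\|_V^2\|\partial_x y\|_H^2$; the source term is treated analogously, absorbing a further $\tfrac{\nu}{4}\|\partial_{xx}y\|_H^2$. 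After absorption this leaves a differential inequality $\tfrac{d}{dt}\|\partial_x y\|_H^2\le g(t)\|\partial_x y\|_H^2+f(t)$. Here lies the main obstacle: the weight $g(t)=\tfrac{C}{\nu}\|y(t)\|_V^2$ involves the energy itself, so a naive comparison produces a term $\|\partial_x y\|_H^4$ suggesting finite-time blow-up. The remedy is to use the $L^2(V)$ bound from the first step: since $g\in L^1(0,T)$ with $\int_0^T g\,d\tau$ already controlled by the data, Gronwall's lemma applies with $g$ as a known integrable weight and yields a bound on $\|\partial_x y(t)\|_H^2$ uniform in $t$. Combined with the first step this gives $\|y\|_{L^\infty(V)}$.

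Finally, for $\|y\|_{W(V)}$ it remains to bound $\partial_t y$ in $L^2(V^*)$. Reading $\partial_t y=-y\partial_x y+\nu\partial_{xx}y+\calB u$ as an identity in $V^*$ and pairing with $\phi\in V$, I would write the transport term in conservative form $\langle y\partial_x y,\phi\rangle_H=-\tfrac12\langle y^2,\partial_x\phi\rangle_H$ and estimate it by $\tfrac12\|y\|_{L^4}^2\|\phi\|_V\le C\|y\|_V^2\|\phi\|_V$ using $V\hookrightarrow L^4(\mathbb R)$, while the viscous and source terms give $\nu\|y\|_V\|\phi\|_V$ and $C|u(t)|\|\phi\|_V$. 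Hence $\|\partial_t y(t)\|_{V^*}\le C(\|y\|_V^2+\|y\|_V+|u(t)|)$, and squaring and integrating bounds $\|\partial_t y\|_{L^2(V^*)}$ through $\|y\|_{L^\infty(V)}$, $\|y\|_{L^2(V)}$ and $\|u\|_{L^2(0,T)}$, all already controlled. Collecting the three estimates and tracking the data-dependence through the exponential Gronwall factors yields the stated form $\Cstate$; I would not grind out the precise constants.
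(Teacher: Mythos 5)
Your proposal is correct and follows essentially the same route as the paper: test with $y$ so the transport term vanishes, test with $-\partial_{xx}y$ and use the previously established $L^2(V)$ bound to make the Gronwall weight integrable, and finish with a duality estimate for $\partial_t y$ in $L^2(V^*)$. The only cosmetic differences are the ordering of the last two steps and the technique in the $V^*$ estimate, where you use the conservative form $-\tfrac12\langle y^2,\partial_x\phi\rangle_H$ with $V\hookrightarrow L^4(\mathbb R)$ while the paper pairs with $\RieszIsoAlt\partial_t y$ and splits the nonlinearity as $\|y\|_{L^\infty(H)}\|y\|_{L^2(V)}\|\cdot\|_{L^2(L^\infty)}$; both variants land within the claimed constant $\Cstate$.
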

\begin{proof}
 For $y(t)\in V$ 
 \begin{align*}
  \int_{\mathbb R} y(t)\partial_x y(t) y(t)\,dx = - 2\int_{\mathbb R} y(t)\partial_x y(t)y(t) \,dx
  \Rightarrow \<y(t)\partial_x y(t),y(t)\>_H = 0
 \end{align*}
 holds.
 Hence we get
 \begin{align*}
  \frac{1}{2} \frac{d}{dt} \|y(t)\|_{H}^2 + \nu \|y(t)\|_{V}^2
   &= \<\calB u(t),y(t)\>_{V^*,V} + \nu\|y(t)\|_H^2 \\
  &\le C|u(t)|\|y(t)\|_{V} + \nu\|y(t)\|_H^2 \\
  &\le \gamma^{-1}C|u(t)|^2 + \gamma C \|y(t)\|_V^2 + \nu\|y(t)\|_H^2
 \end{align*}
 by multiplying the state equation (\ref{equ:discrState}) by $y$
 and integration over $\mathbb R$.
 Setting $\gamma=\frac{\nu}{2C}$ gives
 \begin{align}
  \label{equ:ddtyEst}
  \frac{d}{dt}\|y(t)\|_H^2 + \nu \|y(t)\|_V^2 \le \frac{4C^2}{\nu}|u(t)|^2 + 2\nu \|y(t)\|_H^2
 \end{align}
 Integration over $(0,t)$ yields
 \begin{align*}
  \|y(t)\|_H^2 - \|y(0)\|_H^2 + \nu\|y\|_{L^2(0,t;V)}^2 
   \le \frac{4C^2}{\nu} \|u\|_{L^2(0,t)}^2 + 2\nu\int_0^t \|y(\tau)\|_H^2 \,d\tau
 \end{align*}
 Due to Gronwall's lemma we obtain
 \begin{align}
  \label{equ:yLinfHest}
  \|y\|_{L^\infty(H)}^2 \le C(\|u\|_{L^2(0,T)}^2 + \|y_0\|_H^2)
 \end{align}
 and
 \begin{align}
  \label{equ:yL2Vest}
  \|y\|_{L^2(V)}^2 \le C(\|u\|_{L^2(0,T)}^2 + \|y_0\|_H^2).
 \end{align}
 The $L^2(V^*)$ estimation is obtained by multiplying $\RieszIsoAlt\partial_t y$ and
 integrating over $\mathbb R\times(0,T)$
 \begin{align*}
  \|\partial_t y\|_{L^2(V^*)}^2 &= 
   \<\partial_t y,\RieszIsoAlt\partial_t y\>_{L^2(V^*),L^2(V)} \\
  &= -\<y\partial_x y,\RieszIsoAlt\partial_t y\>_{L^2(H)}
  -\nu\<\partial_x y, \partial_x\RieszIsoAlt\partial_t y \>_{L^2(H)} 
  + \<\calB u,\RieszIsoAlt\partial_t y\>_{L^2(V^*),L^2(V)}  \\
  &\le \|y\|_{L^\infty(H)}\|y\|_{L^2(V)}\|\RieszIsoAlt\partial_t y\|_{L^2(L^\infty)} + \big(\nu\|y\|_{L^2(V)} + C\|u\|_{L^2(0,T)}  \big)
   \|\partial_t y\|_{L^2(V^*)} \\
  &\le \big( \|y\|_{L^\infty(H)}\|y\|_{L^2(V)} + \nu\|y\|_{L^2(V)} + C\|u\|_{L^2(0,T)}  \big)
   \|\partial_t y\|_{L^2(V^*)}
 \end{align*}
 due to the embedding $V\hookrightarrow H$ and $V\hookrightarrow L^\infty$.
 Since \eqref{equ:yL2Vest} and \eqref{equ:yLinfHest} hold we obtain
 \begin{align*}
  \|\partial_t y\|_{L^2(V^*)} \le C \big( \|u\|_{L^2(0,T)} + \|y_0\|_H \big)^2
 \end{align*}
 which yields
 \begin{align*}
  \|y\|_{W(V)} \le C \big( \|u\|_{L^2(0,T)} + \|y_0\|_H \big)^2
 \end{align*}
 Finally, we estimate the $L^\infty(V)$ bound by multiplying the state equation (\ref{equ:discrState}) by $-\partial_{xx} y$ and integrating over $\mathbb R$.
 \begin{align*}
  \frac12 \frac{d}{dt} \|\partial_x y(t)\|_H^2 &+ \nu \|y(t)\|_{H^2}^2 = \<\locFunc' u(t),\partial_x y(t)\>_H + \<y(t)\partial_x y(t),\partial_{xx} y(t)\>_H
   + \nu\|y(t)\|_{V}^2 \\
  &\le C|u(t)| \|\partial_x y(t)\|_H + \|y(t)\|_{L^\infty}\|\partial_x y(t)\|_H\|\partial_{xx} y(t)\|_H + \nu \|y(t)\|_V^2 \\
  &\le C|u(t)| \|\partial_x y(t)\|_H + (2\gamma)^{-1} \|y(t)\|_{L^\infty}^2 \|\partial_x y(t)\|_H^2
   + \frac{\gamma}{2}\|y(t)\|_{H^2}^2 + \nu \|y(t)\|_V^2
 \end{align*}
 Setting $\gamma=\nu$ and using Young's inequality yields
 \begin{align*}
  \frac{d}{dt}\|\partial_x y(t)\|_H^2 &+ \frac{\nu}{2} \|y(t)\|_{H^2}^2 \le \frac{C^2}{2} |u(t)|^2 + \frac{1}{2\nu}\|\partial_x y(t)\|_H^2 
   + \frac2\nu \|y(t)\|_{L^\infty}^2 \|\partial_x y(t)\|_H^2 + \nu \|y(t)\|_V^2
 \end{align*}
 Integrating over $(0,T)$ yields
 \begin{align*}
  \|\partial_x y(T)\|_H^2 - \|\partial_x y(0)\|_H^2 + \nu\|y\|_{L^2(H^2)}^2
  &\le C \|u\|_{L^2(0,T)}^2 + \nu \|y\|_{L^2(V)}^2 \\
  &\quad+ \int_0^T (1+\frac{2}{\nu}) \|y(\tau)\|_{L^\infty}^2) \|\partial_x y(\tau)\|_H^2 \,d\tau 
 \end{align*}
 By applying Gronwall's lemma we obtain
 \begin{align*}
  \|y\|_{L^2(H^2)}^2 &\le C( \|u\|_{L^2(0,T)}^2 + \|y_0\|_V^2 ) \exp(c T \int_0^T \|y(\tau)\|_{L^\infty}^2\,d\tau ) \\
  & \le C( \|u\|_{L^2(0,T)}^2 + \|y_0\|_V^2 ) \exp(C( \|u\|_{L^2(0,T)}^2 + \|y_0\|_H^2) )
 \end{align*}
 as $V\hookrightarrow L^\infty$ and hence $\|y\|_{L^2(L^\infty)} \le c\|y\|_{L^2(V)} \le C(\|u\|_{L^2(0,T)}^2 + \|y_0\|_H^2 )$ due to \eqref{equ:yL2Vest}.
 Analogous to the derivation of \eqref{equ:yLinfHest} we obtain
 \begin{align*}
  \|y\|_{L^\infty(V)}^2 \le C( \|u\|_{L^2(0,T)}^2 + \|y_0\|_V^2 ) \exp( C(\|u\|_{L^2(0,T)}^2 + \|y_0\|_H^2) )
 \end{align*}
 Combining all results yields the assumption.
 
\end{proof}

We introduce the constraint function $e=(e_1,e_2):X\to Z^*$ where $X=W(V) \times U_{ad}$ 
and $Z=L^2(V)\times H$. By defining $e$ as
\begin{align}
 \label{equ:weakStateE}
 \begin{aligned}
  e(y,u) :=
  \begin{pmatrix}
   \partial_t y + y\partial_x y - \nu\partial_{xx} y - \calB u \\
   y(0) - y_0
  \end{pmatrix}
 \end{aligned}  
\end{align}
the optimal control problem can be understood as the constrained minimisation problem:
\begin{flalign*}
 &\mbox{\minCont} \hspace{40mm}
 \mbox{minimise }J(y,u)
 \qquad\mbox{s.t.}\qquad
 e(y,u) = 0&
\end{flalign*}
over $(y,u)\in X$.
The \frechet derivatives with
respect to $x=(y,u)$ are denoted by a prime and with respect to $y$ and $u$ by $e_y$ and
$e_u$, respectively.

\begin{Theorem}
 The cost functional $J$ and the constraint function $e$ are twice Fr\'echet differentiable and their second Fr\'echet
 derivatives are Lipschitz-continuous on $V\times H^1(0,T)$.
\end{Theorem}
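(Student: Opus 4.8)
The plan is to exploit that both $J$ and $e$ are \emph{polynomial of degree at most two} in $x=(y,u)$, so that the statement reduces to the boundedness of a handful of bilinear forms. First I would split each map into a constant part, a bounded linear part, and a bounded quadratic part. For the cost,
\begin{align*}
 J(y,u) = \tfrac12\|y(T)\|_H^2 - \<y(T),y_d\>_H + \tfrac12\|y_d\|_H^2 + \tfrac\sigma2\|u\|_{H^1(0,T)}^2,
\end{align*}
where the quadratic contributions stem from the symmetric bilinear forms $(y_1,y_2)\mapsto\<y_1(T),y_2(T)\>_H$ and $(u_1,u_2)\mapsto\<u_1,u_2\>_{H^1(0,T)}$, the term $\<y(T),y_d\>_H$ is linear and $\tfrac12\|y_d\|_H^2$ is constant. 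For the constraint, every entry of $e$ apart from the convection term is linear or affine: $\partial_t y$, $-\nu\partial_{xx}y$, $-\calB u$ and $y(0)-y_0$ are bounded linear (resp.\ affine) in $(y,u)$, while $y\partial_x y$ is the diagonal of the bilinear map $N(y_1,y_2):=y_1\partial_x y_2$, which I would symmetrise as $\tilde N(y_1,y_2):=\tfrac12(y_1\partial_x y_2+y_2\partial_x y_1)$ so that $\tilde N(y,y)=y\partial_x y$.

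Then I would invoke the elementary calculus for quadratic maps: if $B:X\times X\to Y$ is a bounded symmetric bilinear map and $Q(x):=B(x,x)$, then $Q$ is twice \frechet differentiable with $Q'(x)h=2B(x,h)$ and $Q''(x)(h,k)=2B(h,k)$. The point is that $Q''$ does \emph{not} depend on the base point $x$, so it is a bounded bilinear operator with $\|Q''\|\le 2\|B\|$ that is trivially Lipschitz continuous in $x$, the Lipschitz modulus being $0$. The bounded linear and the constant parts have vanishing second derivative. Hence, once the relevant bilinear forms are shown to be bounded, twice \frechet differentiability is automatic, and the second derivatives, being constant, are Lipschitz continuous on the whole of $V\times H^1(0,T)$.

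So the only genuine work is boundedness of the bilinear forms. For $J$ this is immediate: the trace $y\mapsto y(T)$ is bounded from $W(V)$ into $H$ via the standard embedding $W(V)\hookrightarrow C([0,T];H)$, and $\<\cdot,\cdot\>_{H^1(0,T)}$ is bounded by definition. The main obstacle is to bound the convection form $N(y_1,y_2)=y_1\partial_x y_2$ as a map into $L^2(V^*)\subset Z^*$. Here I would use the one-dimensional Sobolev embedding $V=H^1(\mathbb R)\hookrightarrow L^\infty(\mathbb R)$ already exploited above: for a.e.\ $t$ and $\phi\in V$,
\begin{align*}
 \<y_1(t)\partial_x y_2(t),\phi\>_{V^*,V}
 = \int_{\mathbb R} y_1(t)\,\partial_x y_2(t)\,\phi\,dx
 \le \|\phi\|_{L^\infty}\,\|y_1(t)\|_H\,\|\partial_x y_2(t)\|_H
 \le C\,\|y_1(t)\|_H\,\|y_2(t)\|_V\,\|\phi\|_V,
\end{align*}
so that $\|y_1(t)\partial_x y_2(t)\|_{V^*}\le C\|y_1(t)\|_H\|y_2(t)\|_V$. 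Integrating in time and using $W(V)\hookrightarrow L^\infty(H)$ together with $W(V)\hookrightarrow L^2(V)$ gives
\begin{align*}
 \|N(y_1,y_2)\|_{L^2(V^*)}^2
 \le C\int_0^T \|y_1(t)\|_H^2\,\|y_2(t)\|_V^2\,dt
 \le C\,\|y_1\|_{L^\infty(H)}^2\,\|y_2\|_{L^2(V)}^2
 \le C\,\|y_1\|_{W(V)}^2\,\|y_2\|_{W(V)}^2,
\end{align*}
and the same estimate with the roles of $y_1,y_2$ interchanged bounds the second ordering, so $\tilde N$ is a bounded symmetric bilinear map.

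Finally I would assemble the pieces. Writing $J$ and $e$ as (constant) $+$ (bounded linear) $+$ (bounded quadratic), the two previous paragraphs show that both are twice \frechet differentiable, with $e''(y,u)$ acting only through $\tilde N$ in the first component (and vanishing in the second), and $J''$ acting through the $H$-trace and the $H^1(0,T)$ inner product. Since these second derivatives are \emph{independent of the base point}, they are Lipschitz continuous on $V\times H^1(0,T)$ with Lipschitz constant $0$, which is exactly the assertion. The only place where care is genuinely required is the convection estimate; everything else is bookkeeping with bounded linear operators.
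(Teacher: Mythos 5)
Your proposal is correct, and it is necessarily a different route from the paper's, because the paper offers no argument at all: its ``proof'' is a single citation to the first author's thesis (\cite{MyPhd}, p.~66). Your observation that both $J$ and $e$ are polynomials of degree two in $(y,u)$ is exactly the right structural point, and it makes the whole statement reduce to boundedness of finitely many bilinear forms: the quadratic calculus you quote ($Q(x)=B(x,x)$ with $B$ bounded symmetric gives $Q'(x)h=2B(x,h)$, $Q''(x)(h,k)=2B(h,k)$ constant in $x$) is standard and correctly applied, and the only term needing an estimate is the convection form $N(y_1,y_2)=y_1\partial_x y_2$. Your bound
$\|y_1(t)\partial_x y_2(t)\|_{V^*}\le C\|y_1(t)\|_H\|y_2(t)\|_V$, obtained from $V\hookrightarrow L^\infty(\mathbb R)$ in one dimension, followed by integration in time and the embeddings $W(V)\hookrightarrow C([0,T];H)$ and $W(V)\hookrightarrow L^2(V)$, is exactly what is needed to place $\tilde N$ boundedly into $L^2(V^*)\subset Z^*$; the traces $y\mapsto y(T)$ and $y\mapsto y(0)$ are handled by the same $C([0,T];H)$ embedding, and the remaining pieces ($\partial_t y$, $-\nu\partial_{xx}y$ in the weak sense, $\calB u$) are bounded linear. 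Two remarks on what your argument buys beyond the paper's citation: first, it is self-contained and shows that the Lipschitz claim is actually trivial --- the second derivatives are \emph{constant} maps, so Lipschitz with modulus $0$ --- which is strictly stronger than what the theorem asserts; second, it implicitly corrects a slight imprecision in the statement, since differentiability of $e$ is naturally formulated on $W(V)\times H^1(0,T)$ (where $\partial_t y\in L^2(V^*)$ makes sense) rather than on $V\times H^1(0,T)$, and your estimates are phrased in precisely those norms.
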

\begin{proof}
 See \cite{MyPhd}, p. 66.
\end{proof}

\noindent The following theorem states the existence of a solution to the optimal control
 problem \minCont.
\begin{Theorem}
 \label{thm:existOptimalSolution}
 There exists an optimal solution to \minCont.
\end{Theorem}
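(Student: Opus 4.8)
The plan is to use the direct method of the calculus of variations. First I would note that $J(y,u)\ge 0$ for every feasible pair, so the infimum $j:=\inf J$ over the feasible set $\{(y,u)\in X : e(y,u)=0\}$ is finite and nonnegative, and the feasible set is nonempty because Theorem \ref{thm:existenceForward} provides a state $y$ for every $u\in U_{ad}$. I would then select a minimizing sequence $(y_n,u_n)\in X$ with $e(y_n,u_n)=0$ and $J(y_n,u_n)\to j$.

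The next step is to extract weak limits. Since $J(y_n,u_n)$ is bounded, the regularisation term $\frac{\sigma}{2}\|u_n\|_{H^1(0,T)}^2$ keeps $\|u_n\|_{H^1(0,T)}$ bounded, and as $H^1(0,T)$ is a Hilbert space I may pass to a subsequence with $u_n\rightharpoonup\bar u$ weakly in $H^1(0,T)$. Because $U_{ad}$ is convex and closed, it is weakly closed, so $\bar u\in U_{ad}$. Using $H^1(0,T)\hookrightarrow L^2(0,T)$, the uniform bound on $\|u_n\|_{L^2(0,T)}$ together with Theorem \ref{thm:boundedForward} gives $\|y_n\|_{W(V)}$ and $\|y_n\|_{L^\infty(V)}$ bounded by $\Cstate$, uniformly in $n$; the same theorem also bounds $y_n$ in $L^\infty(0,T;H^2(\mathbb R))$. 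Passing to a further subsequence I obtain $y_n\rightharpoonup\bar y$ weakly in $W(V)$, and I would retain these additional uniform bounds for the limit passage.

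The crucial step is to show the limit pair is feasible, i.e. $e(\bar y,\bar u)=0$. Testing the weak form \eqref{equ:weakState} against a spatially compactly supported $\phi$ (which is admissible by density of $C_0^\infty$ in $V$ and continuity of all terms in $\phi$), the linear diffusion term and the source term $\<\calB u_n,\phi\>$ pass to the limit directly by weak convergence, while the initial condition survives because evaluation at $t=0$ is continuous from $W(V)$ to $H$. The genuine difficulty is the convection term $\<y_n\partial_x y_n,\phi\>$, for which weak convergence is not enough: I need strong convergence of $y_n$ in some local topology. I expect this to be the main obstacle, because on the whole line $V=H^1(\mathbb R)\hookrightarrow H$ is \emph{not} compact, so the Aubin--Lions lemma does not apply globally. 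My plan is to recover compactness locally: on each interval $[-R,R]$ the embedding $H^1(-R,R)\hookrightarrow L^2(-R,R)$ is compact, so Aubin--Lions yields $y_n\to\bar y$ strongly in $L^2(0,T;L^2(-R,R))$, and a diagonal argument over $R\to\infty$ gives $y_n\to\bar y$ in $L^2(0,T;L^2_{loc}(\mathbb R))$. Since $\phi$ (and, where relevant, $\locFunc$) has bounded support, $y_n\phi\to\bar y\phi$ strongly in $L^2(0,T;H)$ while $\partial_x y_n\rightharpoonup\partial_x\bar y$ weakly in $L^2(0,T;H)$, and the product of a strongly and a weakly convergent factor converges to the product of the limits. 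This passes the nonlinear term to the limit and shows $(\bar y,\bar u)$ is feasible.

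Finally I would invoke weak lower semicontinuity of $J$. Both summands are continuous and convex on $W(V)\times H^1(0,T)$, hence weakly lower semicontinuous; for the tracking term this uses that $y\mapsto y(T)$ is continuous from $W(V)$ into $H$, so $y_n(T)\rightharpoonup\bar y(T)$ in $H$ and $\|\bar y(T)-y_d\|_H^2\le\liminf_n\|y_n(T)-y_d\|_H^2$. Therefore $J(\bar y,\bar u)\le\liminf_n J(y_n,u_n)=j$, and since $(\bar y,\bar u)$ is feasible we also have $J(\bar y,\bar u)\ge j$. Hence $J(\bar y,\bar u)=j$, so $(\bar y,\bar u)$ is an optimal solution to \minCont.
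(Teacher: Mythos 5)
Your proof is correct, but it handles the crucial step --- passing the nonlinear convection term to the limit --- by a genuinely different route than the paper. The paper first integrates by parts, rewriting $-2\<y_n\partial_x y_n,\phi\>_H=\<y_n^2,\partial_x\phi\>_H$, so that it only needs convergence of the squares $y_n(t)^2$; it obtains this from pointwise-in-time weak convergence $y_n(t)\rightharpoonup\bar y(t)$ in $V$, the embedding $V\hookrightarrow C^{0,\alpha}(\mathbb R)$ and Banach--Alaoglu, concluding $y_n(t)^2\rightharpoonup^*\bar y(t)^2$ in $L^\infty(\mathbb R)$ (equation \eqref{equ:existOpt3}). You instead keep the term $\<y_n\partial_x y_n,\phi\>_H$ as it stands and recover the missing compactness explicitly: localized Aubin--Lions on $[-R,R]$ (where $H^1\hookrightarrow L^2$ \emph{is} compact), a diagonal argument in $R$, strong convergence in $L^2(0,T;L^2_{loc})$, and then the standard weak--strong product argument against compactly supported test functions, extended to all of $V$ by density. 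Your route is the more robust textbook argument: it confronts head-on the failure of compactness on the whole line, works with time-integrated quantities (avoiding the delicate extraction of a single subsequence converging weakly in $V$ for a.e.\ $t$), and makes rigorous what the paper leaves implicit --- note that weak-$*$ convergence alone does \emph{not} imply weak-$*$ convergence of squares, so the paper's step \eqref{equ:existOpt3} secretly relies on exactly the local uniform convergence that your Aubin--Lions argument supplies. What the paper's integration-by-parts trick buys is that no convergence of $\partial_x y_n$ paired with a strong factor is ever needed. Two minor points: your control extraction (weak $H^1(0,T)$ convergence from the Tikhonov term, plus convexity and closedness of $U_{ad}$) differs from the paper's weak-$*$ $L^\infty(0,T)$ argument but is equally valid and in fact gives strong $L^2(0,T)$ convergence for free; and the $L^\infty(0,T;H^2(\mathbb R))$ bound you attribute to theorem \ref{thm:boundedForward} is actually part of theorem \ref{thm:existenceForward} --- harmless, since you never use it.
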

\begin{proof}
 Due to theorem \ref{thm:existenceForward} there exists a unique solution $y\in W(V)\cap L^\infty(V)$
 to \eqref{equ:weakState} for every  $u\in U_{ad}$.
 Moreover, $J(y,u)\ge 0$ for all $y\in W(V)$ and $u\in U_{ad}$. Hence,
 \begin{align*}
  j = \inf_{u\in U_{ad}} J(y(u),u) \in \mathbb R^+_0
 \end{align*}
 exists.
 We define a minimising sequence $(y_n,u_n)\in X$ by
 \begin{align*}
  e(y_n,u_n) = 0 \qquad\mbox{ and }\qquad J(y_n,u_n) \to j \quad\mbox{as}\quad n\to\infty
 \end{align*}
 Since $U_{ad}\subset L^\infty(0,T)$ is bounded there exists a subsequence of $u_n$, 
 again denoted by $u_n$, with
 \begin{align}
  \label{equ:existOpt1}
  u_n \rightharpoonup^* \bar u,\quad n\to\infty\qquad \mbox{in }L^\infty(0,T) 
 \end{align}
  and $\bar u\in U_{ad}$ as $U_{ad}$ is weakly closed.
 The states $y_n$ are bounded $\|y_n\|_{W(V)}\le C$. Hence, there exists a subsequence 
 of $y_n$ with
 \begin{align}
  \label{equ:existOpt2}
  y_n \rightharpoonup \bar y,\quad n\to\infty\qquad \mbox{in }W(V)
 \end{align}
 $y_n$ is also bounded in $L^\infty(V)$, i.e. $\|y_n\|_{L^\infty(V)}\le C$, and hence we obtain a subsequence
 \begin{align*}
  y_n(t) \rightharpoonup \bar y(t),\quad n\to\infty\qquad \mbox{in } V
 \end{align*}
 for a.e. $t\in (0,T)$ and due to the embedding $V\hookrightarrow C^{0,\alpha}(\mathbb R)$, for $0<\alpha<\frac12$, we get
 \begin{align*}
    y_n(t) \rightharpoonup^* \bar y(t),\quad n\to\infty\qquad \mbox{in } L^\infty(\mathbb R)
 \end{align*}
 due to Banach Alaoglu \cite{Alt}. Furthermore, this implies
 \begin{align}
  \label{equ:existOpt3}
    y_n(t)^2 \rightharpoonup^* \bar y(t)^2,\quad n\to\infty\qquad \mbox{in } L^\infty(\mathbb R) .
 \end{align}
 Since \eqref{equ:existOpt2} holds we obtain for all $\phi\in L^2(V)$
 \begin{align*}
  \int_0^T \<\partial_t y_n(t),\phi(t)\>_{V^*,V} + \<y_n,\phi(t)\>_V \,dt 
  \to \int_0^T \<\partial_t \bar y(t),\phi(t)\>_{V^*,V} + \<\bar y,\phi(t)\>_V \,dt
 \end{align*}
 and
 \begin{align*}
  -2 \int_0^T \<y_n(t)\partial_x y_n(t),\phi(t)\>_H \,dt
  = \int_0^T \<y_n(t)^2,\partial_x\phi(t)\>_H \,dt
  \to \int_0^T \<\bar y(t)^2,\partial_x\phi(t)\>_H \,dt
 \end{align*}
 due to \eqref{equ:existOpt3}.
 Moreover, we get
 \begin{align*}
  \<\calB u_n,\phi\>_{L^2(V^*),L^2(V)} \to \<\calB \bar u,\phi\>_{L^2(V^*),L^2(V)}
 \end{align*}
 as 
 \begin{align*}
  \<\calB u_n,\phi\>_{L^2(V^*),L^2(V)} 
  = \<u_n, 
  \underbrace{\int\nolimits_{\mathbb R} \locFunc(x)\phi(x,t)\,dx}_{\in L^2(0,T)}\>_{L^2(0,T)}
 \end{align*}
 and \eqref{equ:existOpt1} hold.
 This yields $e_1(\bar y,\bar u) = 0$.
 The convergence of $y_n$ to $\bar y$ in $W(V)$ also yields 
 $y_n(0)\rightharpoonup\bar y(0)$ in $H$ and hence
 \begin{align*}
  \< y_n(0),\psi\>_H \to \< \bar y(0),\psi\>_H
 \end{align*}
 which yields $e_2(\bar y,\bar u)=0$.
 Finally, we conclude
 \begin{align*}
  e(\bar y,\bar u) = (e_1(\bar y,\bar u), e_2(\bar y,\bar u)) = 0
 \end{align*}
 Due to definition $J$ is lower-semi-continuous, i.e.
 \begin{align*}
  J(\bar y,\bar u) \le \liminf_{n\to\infty} J(y_n,u_n) = j
 \end{align*}
 As $J(\bar y,\bar u)\le j$ we obtain $J(\bar y,\bar u) = j$ and hence $j$ is a minimum.
 
\end{proof}
We define the Lagrange functional by
\begin{align*}
 L(y,u,p,\lambda) := J(y,u) + \<e(y,u),(p,\lambda)\>_{Z^*,Z}
\end{align*}
which enables us to state the first order optimality condition.
\begin{Theorem}
 \label{thm:existLagMult}
 Let $(\bar y,\bar u)\in W(V)\cap L^\infty(V)\times U_{ad}$ be an optimal solution to \minCont.
 There exist Lagrange multipliers $\bar p\in W(V)$ and $\bar\lambda\in H$ satisfying the
 first order necessary optimality condition
 \begin{align}
  \label{equ:firstOrderOptCont}
  L_y(\bar y,\bar u, \bar p,\bar\lambda)  = 0,
  \qquad
  L_u(\bar y,\bar u, \bar p,\bar\lambda)(u-\bar u) \ge 0 \quad\forall u\in U_{ad}
  \qquad\mbox{ and }\qquad
  e(\bar y,\bar u) = 0.
 \end{align}
\end{Theorem}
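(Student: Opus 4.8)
The plan is to verify the hypotheses of the abstract Lagrange-multiplier existence theorem for equality-constrained optimisation in Banach spaces (the Karush--Kuhn--Tucker theory, see e.g. \cite{volkwein-basics,Troel}) and then to read off the three conditions in \eqref{equ:firstOrderOptCont}. By the preceding theorem, $J$ and $e$ are (twice, hence in particular once) continuously \frechet differentiable on $X=W(V)\times U_{ad}$, and $U_{ad}$ is a closed convex subset of $H^1(0,T)$. Feasibility $e(\bar y,\bar u)=0$ holds because $(\bar y,\bar u)$ is admissible by assumption. The only nontrivial hypothesis is the constraint qualification, and the core of the proof is to establish it: I would show that the partial \frechet derivative $e_y(\bar y,\bar u):W(V)\to Z^*=L^2(V^*)\times H$ is a continuous linear isomorphism.

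The linearisation of $e$ in the $y$-direction reads
\[
 e_y(\bar y,\bar u)\,\delta y = \begin{pmatrix}\partial_t\,\delta y + \partial_x(\bar y\,\delta y) - \nu\partial_{xx}\,\delta y\\[1mm] \delta y(0)\end{pmatrix},
\]
so that bijectivity of $e_y(\bar y,\bar u)$ is equivalent to unique solvability in $W(V)$ of the \emph{linearised Burgers equation} $\partial_t\,\delta y + \partial_x(\bar y\,\delta y) - \nu\partial_{xx}\,\delta y = f$, $\delta y(0)=g$, for arbitrary data $(f,g)\in L^2(V^*)\times H$. This is a linear convection--diffusion problem whose drift coefficient $\bar y$ enjoys the regularity $\bar y\in L^\infty(V)\cap L^\infty(0,T;H^2(\mathbb R))$ granted by Theorem \ref{thm:existenceForward} and Theorem \ref{thm:boundedForward}; in particular $\bar y$ and $\partial_x\bar y$ are bounded in $L^\infty(\mathbb R\times(0,T))$ via $H^2(\mathbb R)\hookrightarrow C^1$. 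I would prove well-posedness by a Galerkin scheme combined with energy estimates of exactly the type carried out in the proof of Theorem \ref{thm:boundedForward}: testing with $\delta y$ and with $-\partial_{xx}\,\delta y$, integrating the transport term by parts, and absorbing the critical contributions by Young's inequality and Gronwall's lemma. These estimates simultaneously deliver the a priori bound $\|\delta y\|_{W(V)}\le C\big(\|f\|_{L^2(V^*)}+\|g\|_H\big)$, i.e. boundedness of the inverse, so that $e_y(\bar y,\bar u)$ is an isomorphism by the bounded inverse theorem.

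With the isomorphism in hand, the adjoint $e_y(\bar y,\bar u)^*:Z\to W(V)^*$ is an isomorphism as well, and I would define the multipliers $(\bar p,\bar\lambda)\in Z=L^2(V)\times H$ as the unique solution of the adjoint equation $e_y(\bar y,\bar u)^*(\bar p,\bar\lambda)=-J_y(\bar y,\bar u)$; this is precisely the identity $L_y(\bar y,\bar u,\bar p,\bar\lambda)=0$. The improved regularity $\bar p\in W(V)$, rather than merely $\bar p\in L^2(V)$, follows from the backward parabolic structure of the adjoint problem by reapplying the estimates above to the transposed (terminal-value) equation, whose right-hand side $-J_y$ acts through $\langle\bar y(T)-y_d,\,\cdot\,(T)\rangle_H$. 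For the variational inequality I would pass to the reduced functional $\hat J(u):=J(y(u),u)$, where $u\mapsto y(u)$ is the control-to-state map from Theorem \ref{thm:existenceForward}; since $\bar u$ minimises $\hat J$ over the convex set $U_{ad}$, convexity yields $\hat J'(\bar u)(u-\bar u)\ge 0$ for all $u\in U_{ad}$, and the chain rule together with the definition of $(\bar p,\bar\lambda)$ identifies $\hat J'(\bar u)=L_u(\bar y,\bar u,\bar p,\bar\lambda)$. Collecting these facts with feasibility gives \eqref{equ:firstOrderOptCont}.

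The main obstacle is the constraint qualification of the second paragraph, namely that $e_y(\bar y,\bar u)$ maps $W(V)$ \emph{onto} all of $Z^*$ with bounded inverse. The surjectivity onto the $L^2(V^*)$-component is the delicate point: one must close the energy estimates in the full $W(V)$-norm despite the transport term $\partial_x(\bar y\,\delta y)$, and this is exactly where the extra spatial regularity $\bar y\in L^\infty(0,T;H^2(\mathbb R))$ and the compact-support structure encoded in $\compactVspace$ enter in an essential way.
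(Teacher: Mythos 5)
Your overall strategy coincides with the paper's: both proofs reduce the theorem to the constraint qualification that $e_y(\bar y,\bar u)$ is a boundedly invertible map from $W(V)$ onto $Z^*=L^2(V^*)\times H$, i.e.\ to well-posedness of the linearised convection--diffusion equation $\partial_t v+\partial_x(\bar y v)-\nu\partial_{xx}v=f$, $v(0)=g$, and then invoke the abstract Lagrange-multiplier theorem (the paper cites \cite{Pinnau}, you cite the standard KKT theory). The only genuine difference is in execution: instead of running a Galerkin scheme by hand, the paper verifies that the bilinear form $a(t;w,q)=\int_{\mathbb R}\nu\,\partial_x w\,\partial_x q - \bar y(t)\,q\,\partial_x w\,dx$ is continuous and weakly $V$-coercive, i.e.\ satisfies the G\aa rding inequality $a(t;w,w)\ge\tfrac\nu2\|w\|_V^2-c\|w\|_H^2$, and then quotes the linear parabolic theory (\cite{ShowalterII}, p.~112) for unique solvability together with the bounded-inverse estimate. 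Since your Galerkin-plus-energy argument is essentially the proof of that quoted theorem, the mathematics is the same; the paper's route is simply more economical.

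Two details in your write-up should be corrected. First, for data $(f,g)\in L^2(V^*)\times H$ you cannot test the linearised equation with $-\partial_{xx}\,\delta y$: the solution lies only in $W(V)$, so that pairing is not defined. Fortunately this test is also unnecessary --- the claimed bound $\|\delta y\|_{W(V)}\le C(\|f\|_{L^2(V^*)}+\|g\|_H)$ follows from testing with $\delta y$ alone and then reading off the $\partial_t\delta y$ estimate from the equation by duality, exactly as in the proof of theorem \ref{thm:boundedForward}. Second, your closing claim that the regularity $\bar y\in L^\infty(0,T;H^2(\mathbb R))$ (hence $\partial_x\bar y\in L^\infty$) and the compact-support structure enter ``in an essential way'' is a misdiagnosis: writing the weak form so that no derivative falls on $\bar y$, the continuity and G\aa rding estimates require only $\|\bar y(t)\|_{L^\infty}\le C\|\bar y\|_{L^\infty(V)}$, which is precisely what theorem \ref{thm:boundedForward} supplies; neither the $H^2$ bound nor compact support plays any role in this part of the argument.
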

\begin{proof}
 For all $u\in U_{ad}$ there exists a unique $y\in W(V)$ such that
 $e(y,u) = 0$.
 If $e_y(y(u),u)\in\linFuncSpace(W(V),Z^*)$ has a bounded inverse for all $u\in U_{ad}$
 then there exist Lagrange multiplier $(\bar p,\bar\lambda)\in Z$ satisfying
 \eqref{equ:firstOrderOptCont}, cf. e.g. \cite{Pinnau}.
 Therefore, we show the bijectivity of $e_y(y,u)$, that is, for all 
 $(\eta,\xi)\in Z$ there exists a $v\in L^2(V)$ such that
 \begin{align*}
  e_y(y,u)v = ( D_y e_1(y,u), D_y e_2(y,u) ) = (\eta,\xi)
 \end{align*}
 Due to the definition of $e$ we get for the derivative with respect to the state
 \begin{align*}
  \partial_t v + \partial_x(yv) - \nu \partial_{xx} v &= \rieszIso_V\eta
  &&\mbox{in }L^2(V^*) \\
  v(0) &= \xi
  &&\mbox{in }H \\
 \end{align*}
 Introducing the bilinear form $a(t;\cdot,\cdot):V\times V \to \mathbb R$ by
 \begin{align*}
  a(t;w,q) := \int_{\mathbb R} \nu \partial_x w\partial_x q - y(t)q\partial_x w\,dx
 \end{align*}
 we rewrite $e_y(y,u)$ as
 \begin{align}
  \label{equ:weakFormEy}
  \frac{d}{dt}\<v(t),\psi\>_H + a(t;v(t),\psi) = \<\rieszIso_V\eta(t),\psi\>_{V^*,V}
 \end{align}
 for all $\psi\in V$ and a.e. $t\in (0,T)$.
 To show the unique solvability of \eqref{equ:weakFormEy} it suffices to show that
 $a(t;\cdot,\cdot)$ is continuous and weak $V$-coercive, i.e.
 \begin{align*}
  \exists\kappa_V>0,\kappa_H\ge 0 \;:\;
  a(t;w,w) \ge \kappa_V \|w\|_V^2 - \kappa_H \|w\|_H^2
  \qquad\mbox{for all }w\in V.
 \end{align*}
 (cf. e.g. \cite{ShowalterII}, p. 112).
 The bilinear form $a(t;\cdot,\cdot)$ is continuous as
 \begin{align*}
  |a(t;w,q)| &= \big|-\<y(t)\partial_x w,q\>_H + \nu\<\partial_x w,\partial_x q\>_H\big|
   \le C\|y\|_{L^\infty} \|w\|_V\|q\|_H + \nu \|w\|_V\|q\|_V \\
  &\le C \|w\|_V \|q\|_V
 \end{align*}
 The $V$-coercivity is derived by
 \begin{align*}
  a(t;w,w) &= -\<y(t)\partial_x w,w\>_H + \nu\<\partial_x w,\partial_x w\>_H
   = -\<y(t)\partial_x w,w\>_H + \nu\|w\|_V^2 - \nu\|w\|_H^2\\
  & \ge -\|y(t)\|_{L^\infty}\|w\|_V\|w\|_H + \nu\|w\|_V^2 - \nu\|w\|_H^2 \\
  & \ge -\gamma C \|w\|_V^2 - \gamma^{-1}C \|w\|_H^2 + \nu \|w\|_V^2 - \nu\|w\|_H^2
  \intertext{due to Young's inequality and theorem \ref{thm:boundedForward}}
  & \ge \frac12 \nu \|w\|_V^2 - c \|w\|_H^2
 \end{align*}
 with $\gamma:=\frac{\nu}{2C}$.
 Hence, $e_y(y,u)$ is bijective.

\end{proof}

\noindent The adjoint equation is given by
\begin{align}
 \label{equ:adjoint}
 \begin{aligned}
  \frac{d}{dt} \<p(t),\phi\>_H + \<y(t)\partial_x p(t), \phi\>_H - \nu\<\partial_x p(t),\partial_x\phi\>_H &= 0 \\
  \< p(T), \psi\>_H &= \< y_d - y(T), \psi\>_H
 \end{aligned}
\end{align}
for all $\phi\in V$ and $\psi\in H$.
The variational inequality yields with $\hat J(u)=J(y(u),u)$ for a minimum $\bar u\in U_{ad}$
\begin{align*}
 \<D_u\hat J(\bar u),u-\bar u\>_{H^1(0,T)^*,H^1(0,T)} = \sigma \<u,u-\bar u\>_{H^1(0,T)}
  - \<\calB^* p,u-\bar u\>_{H^1(0,T)^*,H^1(0,T)}
\end{align*}
and hence the gradient reads
\begin{align*}
 \nabla \hat J(u) := \rieszIso^{-1}_{H^1(0,T)} D_u L
\end{align*}
A detailed derivation of the adjoint equation and gradient can be found in \cite{BurgersVolk}.

\begin{Theorem}
 \label{thm:boundedAdjointCont}
 The adjoint equation has a unique solution $p\in W(V)$. Moreover, there exists a constant $C>0$ such that
 \begin{align*}
  \|p\|_{W(V)} \le C \|y(T)-y_d\|_H
 \end{align*}
 holds.
\end{Theorem}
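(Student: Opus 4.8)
The plan is to exploit the backward-parabolic structure of \eqref{equ:adjoint} by reversing time. Setting $q(\tau):=p(T-\tau)$ converts the terminal-value problem into the initial-value problem
\begin{align*}
 \frac{d}{d\tau}\<q(\tau),\phi\>_H + \nu\<\partial_x q(\tau),\partial_x\phi\>_H - \<y(T-\tau)\partial_x q(\tau),\phi\>_H = 0, \qquad q(0)=y_d-y(T),
\end{align*}
for all $\phi\in V$ and a.e.\ $\tau\in(0,T)$, in which the diffusion now carries the correct sign. Note that $q(0)=y_d-y(T)\in H$, since $y_d\in H$ and $y(T)\in H$ via the embedding $W(V)\hookrightarrow C([0,T];H)$.

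First I would establish existence and uniqueness by the same weak-coercivity argument already used for $e_y$ in theorem \ref{thm:existLagMult}. Introduce the bilinear form $\hat a(\tau;w,\psi):=\nu\<\partial_x w,\partial_x\psi\>_H-\<y(T-\tau)\partial_x w,\psi\>_H$. Continuity follows from $\|y\|_{L^\infty(L^\infty)}\le c\|y\|_{L^\infty(V)}\le C$ (theorem \ref{thm:boundedForward} together with $V\hookrightarrow L^\infty$), giving $|\hat a(\tau;w,\psi)|\le C\|w\|_V\|\psi\|_V$. For weak coercivity I would write $\nu\|\partial_x w\|_H^2=\nu\|w\|_V^2-\nu\|w\|_H^2$ and absorb the advection term via Young's inequality, $|\<y\partial_x w,w\>_H|\le\|y\|_{L^\infty}\|w\|_V\|w\|_H\le\gamma C\|w\|_V^2+\gamma^{-1}C\|w\|_H^2$; choosing $\gamma=\nu/(2C)$ yields $\hat a(\tau;w,w)\ge\tfrac\nu2\|w\|_V^2-c\|w\|_H^2$. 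By \cite{ShowalterII} this continuous, weakly coercive form with data in $H$ admits a unique $q\in W(V)$, and $p(t):=q(T-t)\in W(V)$ is then the unique solution of \eqref{equ:adjoint}.

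For the a priori bound I would repeat the energy estimates of theorem \ref{thm:boundedForward}, which are now genuinely simpler because the equation is linear in $q$. Testing with $\phi=q(\tau)$ gives $\tfrac12\tfrac{d}{d\tau}\|q\|_H^2+\hat a(\tau;q,q)=0$, so weak coercivity and Gronwall's lemma produce $\|q\|_{L^\infty(H)}^2+\|q\|_{L^2(V)}^2\le C\|q(0)\|_H^2=C\|y_d-y(T)\|_H^2$. To control $\partial_\tau q$ I would test with $\RieszIsoAlt\partial_\tau q$ and use $V\hookrightarrow L^\infty$ exactly as in theorem \ref{thm:boundedForward}, obtaining $\|\partial_\tau q\|_{L^2(V^*)}\le C\|q\|_{L^2(V)}$; here the right-hand side remains linear in the data because no quadratic nonlinearity is present. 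Undoing the time reversal, which preserves every norm in play, then gives $\|p\|_{W(V)}=\|p\|_{L^2(V)}+\|\partial_t p\|_{L^2(V^*)}\le C\|y(T)-y_d\|_H$.

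The only genuinely delicate point is the second step: after reversing time one must verify that the diffusion sign is correct and that the first-order advection term $\<y\partial_x p,\phi\>$ does not destroy coercivity. This is precisely where the uniform bound $\|y\|_{L^\infty(L^\infty)}\le C$ from theorem \ref{thm:boundedForward} is indispensable; everything else is a linear, and hence easier, replay of the estimates already carried out for the state equation.
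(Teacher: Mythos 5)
Your proposal is correct and is in substance the same argument the paper intends: the paper's one-line proof delegates existence and uniqueness to the weak-coercivity argument for the advection--diffusion bilinear form already given in the proof of Theorem~\ref{thm:existLagMult}, and the a priori bound to the standard linear parabolic theory in Showalter/Zeidler. Your time reversal, the continuity/weak-coercivity estimate using $\|y\|_{L^\infty(L^\infty)}\le c\|y\|_{L^\infty(V)}\le C$ from Theorem~\ref{thm:boundedForward}, and the energy/Gronwall plus $\rieszIso^{-1}_V\partial_\tau q$ estimates are exactly the details behind those citations, so this is the same route written out in full rather than a different one.
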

\begin{proof}
 The existence and uniqueness is show in the proof of theorem \ref{thm:existLagMult}. 
 The estimation is a direct consequence of the existence proof, see e.g. \cite{ShowalterII}, p. 112 or \cite{ZeidlerIIA}, p. 424.
 
\end{proof}

\begin{Lemma}
 \label{lem:lipschitzAdjoint}
 Let $p(y)\in W(V)$ be the solution of (\ref{equ:adjoint}) for $y\in W(V)\cap L^\infty(V)$.
 Then
  \[ \| p(y_1) - p(y_2) \|_{L^2(V)} \le C\| y_1 - y_2 \|_{W(V)} \]
 holds for $y_1,y_2\in W(V)$.
\end{Lemma}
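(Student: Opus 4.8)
The plan is to derive the equation satisfied by the difference $\delta p := p(y_1) - p(y_2)$ and to run a backward-in-time energy estimate. Writing the adjoint equation \eqref{equ:adjoint} for $y_1$ and $y_2$ and subtracting, the key algebraic step is to rewrite the nonlinear coupling by an add-and-subtract trick,
\[
 y_1\partial_x p(y_1) - y_2 \partial_x p(y_2)
 = y_1 \partial_x \delta p + \delta y\, \partial_x p(y_2),
\]
with $\delta y := y_1 - y_2$, so that $\delta p$ solves, in the weak sense,
\[
 \frac{d}{dt}\<\delta p(t),\phi\>_H + \<y_1(t)\partial_x \delta p(t),\phi\>_H
 - \nu\<\partial_x \delta p(t),\partial_x\phi\>_H
 = -\<\delta y(t)\,\partial_x p(y_2)(t),\phi\>_H
\]
for all $\phi\in V$, with terminal condition $\<\delta p(T),\psi\>_H = -\<\delta y(T),\psi\>_H$. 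Thus $\delta p$ satisfies the same linear adjoint operator as before, now driven by the source $-\delta y\,\partial_x p(y_2)$ and by a terminal datum controlled by $\|\delta y(T)\|_H$.

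First I would test this equation with $\phi = \delta p$ and integrate over $(t,T)$; because the problem is posed backwards in time, integrating towards the terminal datum produces the favourable sign for the dissipation term, yielding
\[
 \tfrac12\|\delta p(t)\|_H^2 + \nu\int_t^T \|\partial_x \delta p\|_H^2\,d\tau
 = \tfrac12\|\delta p(T)\|_H^2 + \int_t^T \big(\<y_1\partial_x\delta p,\delta p\>_H + \<\delta y\,\partial_x p(y_2),\delta p\>_H\big)\,d\tau .
\]
The transport term is bounded by $|\<y_1\partial_x\delta p,\delta p\>_H| \le \|y_1\|_{L^\infty}\|\partial_x\delta p\|_H\|\delta p\|_H$, and Young's inequality absorbs one part into the dissipation while keeping $\tfrac{C}{\nu}\|y_1\|_{L^\infty}^2\|\delta p\|_H^2$ as a Gronwall term; here $\|y_1\|_{L^\infty(L^\infty)}$ is bounded by $\Cstate$ through $V\hookrightarrow L^\infty$ and theorem \ref{thm:boundedForward}.

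The crucial term is the source $\<\delta y\,\partial_x p(y_2),\delta p\>_H$, and the whole estimate hinges on distributing its three factors correctly. I would place the $L^\infty(\mathbb R)$ norm on $\delta p$ (using $V\hookrightarrow L^\infty$), the $H$ norm on $\partial_x p(y_2)$, and, decisively, the $H$ norm on $\delta y$, giving
\[
 |\<\delta y\,\partial_x p(y_2),\delta p\>_H|
 \le C\|\delta p\|_V\,\|p(y_2)\|_V\,\|\delta y\|_H
 \le \tfrac{\nu}{4}\|\partial_x\delta p\|_H^2 + \tfrac{C}{\nu}\|p(y_2)\|_V^2\|\delta y\|_H^2 + \tfrac14\|\delta p\|_H^2 .
\]
Since $\delta y$ now enters only through $\|\delta y\|_H$, integration in time gives the source contribution $\tfrac{C}{\nu}\int_0^T\|p(y_2)\|_V^2\|\delta y\|_H^2\,d\tau \le C\|\delta y\|_{L^\infty(H)}^2\|p(y_2)\|_{L^2(V)}^2$, where $\|p(y_2)\|_{L^2(V)}$ is bounded by $\|p(y_2)\|_{W(V)}$ through theorem \ref{thm:boundedAdjointCont} and $\|\delta y\|_{L^\infty(H)}\le C\|\delta y\|_{W(V)}$ by the embedding $W(V)\hookrightarrow C([0,T];H)$. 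Together with $\tfrac12\|\delta p(T)\|_H^2 = \tfrac12\|\delta y(T)\|_H^2 \le C\|\delta y\|_{W(V)}^2$, this yields a right-hand side of order $\|\delta y\|_{W(V)}^2$.

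Finally I would apply (the backward form of) Gronwall's lemma to the resulting $\|\delta p\|_H^2$ inequality to obtain $\|\delta p\|_{L^\infty(H)}^2 \le C\|\delta y\|_{W(V)}^2$, feed this back into the integrated identity to bound $\int_0^T\|\partial_x\delta p\|_H^2\,d\tau \le C\|\delta y\|_{W(V)}^2$, and combine the two via $\|\delta p\|_{L^2(V)}^2 \le T\|\delta p\|_{L^\infty(H)}^2 + \|\partial_x\delta p\|_{L^2(H)}^2$ to conclude. The main obstacle is precisely the bookkeeping in the source term: the naive split putting $L^\infty$ on $\delta y$ leaves $\int_0^T\|\delta y\|_V^2\|p(y_2)\|_V^2\,d\tau$, which cannot be controlled linearly in $\|\delta y\|_{W(V)}$ because $W(V)\not\hookrightarrow L^\infty(V)$; only by shifting the $L^\infty$ bound onto $\delta p$ and keeping $\delta y$ in $L^\infty(H)$ does one recover the required linear dependence and hence the stated Lipschitz estimate.
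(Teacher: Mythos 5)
Your proof is correct, and at its core it is the same argument as the paper's: subtract the two adjoint equations, split the nonlinear term by an add-and-subtract trick, test with the difference, and close with Young's and Gronwall's inequalities (the paper reverses time, $\tilde p_i(t)=p_i(T-t)$, rather than integrating over $(t,T)$, which is purely cosmetic). The genuine difference lies in the bookkeeping of the trilinear source term, and there your closing claim is mistaken. The paper uses the mirrored decomposition $(y_1-y_2)\partial_x p_1 + y_2\partial_x z$ with $z=p_1-p_2$, and estimates the source with precisely the \emph{naive} split you declare impossible:
\[
 |\langle (y_1-y_2)\partial_x p_1, z\rangle_H|
 \le \|y_1-y_2\|_{L^\infty}\,\|\partial_x p_1\|_H\,\|z\|_H
 \le C\,\|y_1-y_2\|_V\,\|p_1\|_V\,\|z\|_H .
\]
This works because Young's inequality is then applied so as to pair $\|p_1\|_V$ with $\|z\|_H$, yielding $C\|y_1-y_2\|_V^2 + C\|p_1(t)\|_V^2\|z(t)\|_H^2$: the first summand integrates to $\|y_1-y_2\|_{L^2(V)}^2 \le \|y_1-y_2\|_{W(V)}^2$, and the second is a Gronwall term with the integrable weight $\|p_1(t)\|_V^2$ (note $p_1\in L^2(V)$ by theorem \ref{thm:boundedAdjointCont}). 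The problematic quantity $\int_0^T\|y_1-y_2\|_V^2\|p\|_V^2\,dt$ that you invoke arises only if one insists on grouping $\|y_1-y_2\|_V\|p\|_V$ together in a single Young term; no $L^\infty(V)$ control of $y_1-y_2$ is ever needed. So your distribution of norms ($L^\infty$ on $\delta p$, with $\delta y$ kept in $H$) is sufficient but by no means necessary. As for what each variant buys: the paper controls $\delta y$ through $\|\delta y\|_{L^2(V)}$ and $\|\delta y(T)\|_H$, with Gronwall factor $\exp(C\|p_1\|_{L^2(V)}^2)$; yours controls it through $\|\delta y\|_{L^\infty(H)}$ and $\|\delta y(T)\|_H$ (both $\le C\|\delta y\|_{W(V)}$ by the embedding $W(V)\hookrightarrow C^0(H)$), with a Gronwall factor of the form $\exp\bigl(C(1+\|y_1\|_{L^\infty(L^\infty)}^2)T\bigr)$, which uses the hypothesis $y_1\in L^\infty(V)$. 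Both routes deliver the stated Lipschitz estimate.
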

\begin{proof}
 We define $p_i := p(y_i)$.
 The adjoint equation \eqref{equ:adjoint} yields for $\tilde p_i(t) = p_i(T-t)$
 and $\tilde y_i(t)=y_i(T-t)$
 \begin{align*}
  \<\partial_t \tilde p_i(t), \phi\>_{V^*,V} 
   - \<\tilde y_i(t)\partial_x \tilde p_i(t),\phi\>_H
   + \nu \<\partial_x \tilde p_i(t),\partial_x \phi\>_H = 0
 \end{align*}
 for all $\phi\in V$. Introducing $z(t) := \tilde p_1(t)-\tilde p_2(t)$ 
 and setting $\phi = z(t)$ yields
 \begin{align*}
  \<\partial_t z(t),z(t)\>_{V^*,V} 
   - \<(\tilde y_1(t)-\tilde y_2(t))\partial_x \tilde p_1(t) 
   - \tilde y_2(t)\partial_x z(t),z(t)\>_H + \<\partial_x z(t),\partial_x z(t)\>_H = 0
 \end{align*}
 Therefore, we obtain
 \begin{align*}
  \frac{1}{2}\frac{d}{dt}\|z(t)&\|_H^2 + \nu\|z(t)\|_V^2 = 
   \<(\tilde y_1(t)-\tilde y_2(t))\partial_x \tilde p_1(t),z(t)\>_H
   - \<\tilde y_2(t)\partial_x z(t),z(t)\>_H + \nu \|z(t)\|_H^2 \\
  &\le c\|\partial_x\tilde p_1(t)\|_{H} \|\tilde y_1(t)-\tilde y_2(t)\|_{L^\infty} \|z(t)\|_H 
   + c\|\tilde y_2(t)\|_{L^\infty}\|z(t)\|_V\|z(t)\|_H + \nu \|z(t)\|_H^2 \\
  &\le C\|\tilde p_1(t)\|_{V} \|\tilde y_1(t)-\tilde y_2(t)\|_{V} \|z(t)\|_H
   + C\|z(t)\|_V\|z(t)\|_H + \nu \|z(t)\|_H^2 \\
  &\le C\|\tilde y_1(t)-\tilde y_2(t)\|_V^2 + C\|\tilde p_1(t)\|_V^2\|z(t)\|_H^2 
   + C\gamma \|z(t)\|_V^2 + (C\gamma^{-1}+\nu) \|z(t)\|_H^2 \\
  &\le C \|\tilde y_1(t)-\tilde y_2(t)\|_V^2 + (\frac{2C^2}{\nu} +\nu)\|\tilde p_1(t)\|_V^2\|z(t)\|_H^2
   + \frac12 \nu \|z(t)\|_V^2
 \end{align*}
 by setting $\gamma = \frac{\nu}{2C}$.
 Integrating over $(0,T)$ yields
 \begin{align*}
  \|z(T)\|_H^2 + \nu\|z\|_{L^2(V)}^2
   \le C \|\tilde y_1-\tilde y_2\|_{L^2(V)}^2 + \|z(0)\|_H^2 + C\int_0^T \|\tilde p_1(t)\|_V^2 \|z(t)\|_H^2 \,dt
 \end{align*}
 Applying Gronwall's lemma we obtain
 \begin{align*}
  \nu\|z\|_{L^2(V)}^2 \le C (\|\tilde y_1-\tilde y_2\|_{L^2(V)}^2 + \|z(0)\|_H^2)
   \exp\Big( C \underbrace{\int_0^T \|\tilde p_1(t)\|_V^2 \,dt}_{=\|\tilde p_1\|_{L^2(V)}^2 \le C} \Big) + \|z(0)\|_H^2
 \end{align*}
 which finally yields
 \begin{align*}
  \|p_1-p_2\|_{L^2(V)}^2 \le C \|y_1-y_2\|_{L^2(V)}^2
   + C \|y_1(T)-y_2(T)\|_{H}^2
 \end{align*}
 The second term of the right hand side is estimated by
 \begin{align*}
  \| y_1(T)-y_2(T) \|_H^2 \le C =  C \|y_1-y_2\|_{W(V)}
 \end{align*}
 due to the embedding $W(V)\hookrightarrow C^0(H)$.
 Combining the results yields
  \[ \| p_1 - p_2 \|_{L^2(V)} \le C\| y_1 - y_2 \|_{W(V)} \] 
\end{proof}

\section{Discretisation via Particle Methods}
\label{sec:particleDiscr}
In this paper we consider the classical particle method. 
This approach approximates an arbitrary function $y\in C^k(\mathbb R)$,
by a finite dimensional basis of Dirac delta distributions. In particular we obtain for $k\ge 0$
the approximation operator $\Pi^h: C^k(\mathbb R) \to \distributionSpace(\mathbb R)$
\begin{align*}
 \Pi^h y(x) := \sum_{i=1}^N y(x_i) \delta(x-x_i) \omega_i
\end{align*}
where $\delta$ denotes the Dirac delta distribution, $x_i$ are the supporting points and
$\omega_i$ are quadrature weights, cf. \cite{Raviart}. This approximation is motivated by
\begin{align*}
 \<y,\phi\> = \int_\Omega y(x)\phi(x)\,dx 
  \simeq \sum_{i=1}^N y(x_i)\phi(x_i)\omega_i 
  = \< \Pi^h y(x), \phi \>
\end{align*}
for appropriate functions $y,\phi$ and inner products.

\begin{remark}
In case of time dependent interpolations the supporting points are moving.
Let $v\in L^\infty(\mathbb R \times(0,T))$ be a given velocity field. Then
the time dependent supporting points are given by the characteristic curve
\[
 \partial_t \Phi(X,t) = v(\Phi(X,t),t)
 \qquad\mbox{and}\qquad
 \Phi(X,0) = X
\]
and the time dependent interpolation operator
\begin{align*}
  \Pi^h(t) y(x) := \sum_{i=1}^N y(\Phi(X_i,t)) \delta(x-\Phi(X_i,t)) \omega_i(t)
\end{align*}
Note that the quadrature weights $\omega_i$ are time-dependent, in particular they are
depending on the
positions $\Phi(X_i,t)$.
These weights can be obtained by $\omega_i(t) = \detGradPhi(X_i,t)\omega_i(0)$, where $\detGradPhi :=\det(\nabla\Phi)$, since
\begin{align*}
 \int_{\Phi(\Omega,t)} y(x,t) \, dx 
  = \int_\Omega y(\Phi(X,t),t) \det(\nabla\Phi(X,T)) \,dX
\end{align*}
and $\Phi(\Omega,0) = \Omega$.
\end{remark}

In order to obtain a continuous approximation of $y$ it is possible to
``smooth'' the Dirac delta distribution by using a Dirac sequence, i.e.
convolve the Dirac delta distribution with a smoothing kernel, cf. e.g. \cite{Raviart}.
Hence, we define the continuous approximation operator $\Pi^h_\eps: C^k(\mathbb R) \to C^\infty(\mathbb R)$ by
\begin{align*}
 y_h:= \Pi^h_\eps y(x) := \sum_{i=1}^N y(x_i) \delta_\eps(x-x_i) \omega_i
\end{align*}
where $\delta_\eps$ denotes a Dirac sequence as defined in the following lemma.
\begin{Lemma}
\label{lem:convolutionEstimate}
 Assume that there exists an integer $r \ge 1$ such that
 \begin{align}
  \label{equ:regularityDeltaEps}
  \left\{\quad \begin{aligned}
   &\int_{\mathbb R^d} \zeta(x) \,dx = 1\\
   &\int_{\mathbb R^d} x^\alpha \zeta(x)\,dx = 0 
     \quad\forall \alpha\in\mathbb N^d \quad \mbox{with}\quad 1\le\alpha\le r-1 \\
   &\int_{\mathbb R^d} |x|^r |\zeta(x)| \,dx < \infty
  \end{aligned} \right.
 \end{align}
 for $\zeta\in C^0(\mathbb R^d)\cap L^1(\mathbb R^d)$.
 Moreover, $\delta_\eps = \eps^{-d} \zeta(\eps^{-1} x)$.
 Then we have for some constant $c>0$ and for all functions $y\in W^{r,p}(\mathbb R^d)$, $1\le p\le \infty$
 \begin{align*}
  \| y * \delta_\eps - y\|_{L^p} \le c \eps^r |y|_{W^{r,p}} 
 \end{align*}
\end{Lemma}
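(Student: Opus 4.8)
The plan is to exploit the vanishing-moment conditions on $\zeta$ through a Taylor expansion of order $r$. First I would use $\int_{\mathbb R^d}\delta_\eps\,dx = 1$ to rewrite the error as the convolution of a difference,
\begin{align*}
 (y*\delta_\eps)(x) - y(x) = \int_{\mathbb R^d}\big(y(x-z)-y(x)\big)\delta_\eps(z)\,dz.
\end{align*}
The scaling $\delta_\eps(z)=\eps^{-d}\zeta(z/\eps)$ transfers the hypotheses \eqref{equ:regularityDeltaEps} from $\zeta$ to $\delta_\eps$: substituting $w=z/\eps$ gives $\int_{\mathbb R^d} z^\alpha\delta_\eps(z)\,dz = \eps^{|\alpha|}\int_{\mathbb R^d} w^\alpha\zeta(w)\,dw$, so this moment vanishes for $1\le|\alpha|\le r-1$, and likewise $\int_{\mathbb R^d}|z|^r|\delta_\eps(z)|\,dz = \eps^r\int_{\mathbb R^d}|w|^r|\zeta(w)|\,dw =: \eps^r M_r$ is finite.

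Next I would expand $y(x-z)$ by Taylor's theorem with integral remainder to order $r$,
\begin{align*}
 y(x-z) = \sum_{|\alpha|\le r-1}\frac{(-z)^\alpha}{\alpha!}D^\alpha y(x)
 + r\sum_{|\alpha|=r}\frac{(-z)^\alpha}{\alpha!}\int_0^1(1-t)^{r-1}D^\alpha y(x-tz)\,dt,
\end{align*}
and insert it into the error representation. The zeroth-order term produces exactly $y(x)\int_{\mathbb R^d}\delta_\eps = y(x)$, which cancels the $-y(x)$, while every term with $1\le|\alpha|\le r-1$ is annihilated by the vanishing moments computed above. Only the order-$r$ remainder survives, leaving the error as a double integral in $z$ and $t$ whose integrand is controlled by $|z|^r|\delta_\eps(z)|$ times the top-order derivatives $D^\alpha y(\cdot-tz)$.

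To finish I would take the $L^p$ norm in $x$ and move it inside both integrals by Minkowski's integral inequality, using the translation invariance $\|D^\alpha y(\cdot-tz)\|_{L^p}=\|D^\alpha y\|_{L^p}$. The factor $\int_0^1(1-t)^{r-1}\,dt=1/r$ cancels the prefactor $r$, the $z$-integral contributes $\eps^r M_r$, and collecting the finitely many top-order terms against the seminorm $|y|_{W^{r,p}}$, via equivalence of norms on the finite index set, gives the claimed bound with $c$ proportional to $M_r$.

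The main obstacle is justifying the Taylor remainder formula for a merely $W^{r,p}$ function rather than a classical $C^r$ one. For $p<\infty$ I would settle this by first proving the estimate for $y\in C^\infty_0(\mathbb R^d)$, where the expansion is elementary, and then extending to all of $W^{r,p}$ by density, since both sides are continuous in the $W^{r,p}$ norm. The case $p=\infty$ needs separate care, as $C^\infty_0$ is not dense in $W^{r,\infty}$; there I would instead invoke the integral-remainder identity directly for the weak derivatives, which holds for a.e.\ $x$ at each fixed $z$ and can be obtained by approximating in a local topology while keeping the top-order derivatives bounded in $L^\infty$.
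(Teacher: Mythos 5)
Your proof is correct: the paper itself gives no argument for this lemma but simply cites Raviart (p.~267), and the classical proof there is exactly your route --- Taylor expansion with integral remainder, cancellation via the vanishing moments of $\zeta$ after rescaling, Minkowski's integral inequality with translation invariance, and a density (respectively mollification) argument to pass from smooth functions to general $y\in W^{r,p}$. Your separate treatment of $p=\infty$, where $C^\infty_0$ is not dense, is a legitimate and necessary refinement rather than a deviation.
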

\begin{proof}
 See \cite{Raviart}, p. 267.
\end{proof}
\noindent In the following we only use smooth kernel functions $\zeta \in C^\infty(\mathbb R)$.

\noindent The handling of time-dependency is analogous to the previous one, in particular
\begin{align*}
 y_h(x,t):= \Pi^h_\eps(t) y(x) := \sum_{i=1}^N y(\Phi(X_i,t)) \delta_\eps(x-\Phi(X_i,t))
\omega_i(t)
\end{align*}
The interpolation error for the smooth operator $\Pi_\eps^h(t)$ is stated in the following
theorem.

\begin{Theorem}
 \label{thm:interpolationError}
 Let the velocity $v\in L^\infty((0,T);W^{m+1,\infty}(\mathbb R))$ 
 and $\delta_\eps\in W^{m+s,1}(\mathbb R)$ be as stated in lemma \ref{lem:convolutionEstimate} for $0<\eps\le1$.
 Then there exists some constant $c>0$ such that for all $y\in W^{k,p}(\mathbb R)$,
 $k=\max(m+s,r)$ and $1<p<\infty$
 \[
   \| y - \Pi^h_\eps(t) y \|_{W^{s,p}} \le c \big( \eps^r |y|_{W^{r,p}} 
    + \frac{h^m}{\eps^{m+s}} \|y\|_{W^{m,p}} \big)
 \]
 holds. 
\end{Theorem}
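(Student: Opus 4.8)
The plan is to follow the classical two-step decomposition used in the analysis of particle methods (cf.\ \cite{Raviart}): split the total error into a \emph{smoothing} (mollification) error and a \emph{quadrature} error by inserting the exact convolution $y*\delta_\eps$,
\begin{align*}
 y - \Pi^h_\eps(t) y = \big( y - y*\delta_\eps \big) + \big( y*\delta_\eps - \Pi^h_\eps(t) y \big),
\end{align*}
and to estimate the two contributions separately by the triangle inequality in $W^{s,p}$. The first bracket will produce the term $\eps^r|y|_{W^{r,p}}$ and the second the term $h^m\eps^{-(m+s)}\|y\|_{W^{m,p}}$.

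For the smoothing error I would estimate $\|y - y*\delta_\eps\|_{W^{s,p}}$ directly from lemma~\ref{lem:convolutionEstimate}. Since differentiation commutes with convolution, $\partial^\beta(y - y*\delta_\eps) = \partial^\beta y - (\partial^\beta y)*\delta_\eps$ for every multi-index $\beta$ with $|\beta|\le s$, so the lemma applied to $\partial^\beta y$ controls each derivative by $c\,\eps^r$ times an $r$-th order seminorm of $y$; summing over $\beta$ and using that $y\in W^{k,p}$ with $k=\max(m+s,r)$ yields finiteness and the claimed factor $\eps^r|y|_{W^{r,p}}$.

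The quadrature error is the heart of the argument. I would first recognise $\Pi^h_\eps(t)y$ as a quadrature rule for the convolution integral $y*\delta_\eps(x)=\int_{\mathbb R} y(\xi)\delta_\eps(x-\xi)\,d\xi$ with nodes $\xi_i=\Phi(X_i,t)$ and weights $\omega_i(t)$. Using $\omega_i(t)=\detGradPhi(X_i,t)\,\omega_i(0)$ and the change of variables $\xi=\Phi(X,t)$, the sum becomes a \emph{fixed}-node quadrature on the reference configuration of the integrand $G(X,x):=y(\Phi(X,t))\,\delta_\eps\big(x-\Phi(X,t)\big)\,\detGradPhi(X,t)$. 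An order-$m$ quadrature estimate then bounds the $L^p(dx)$ error by $c\,h^m\|\partial_X^m G\|$. Differentiating $s$ times in $x$ lands all $x$-derivatives on the kernel, producing a factor $\eps^{-s}$, while expanding $\partial_X^m G$ by the Leibniz and chain rules distributes up to $m$ derivatives onto $y\circ\Phi$ (bounded by $\|y\|_{W^{m,p}}$ after transforming back), onto $\detGradPhi$, and onto $\delta_\eps$; the scaling $\delta_\eps=\eps^{-1}\zeta(\eps^{-1}\,\cdot)$ contributes the remaining factor $\eps^{-m}$, while $v\in L^\infty((0,T);W^{m+1,\infty})$ together with $\delta_\eps\in W^{m+s,1}$ guarantees that all occurring derivatives of $\Phi$ and of the kernel are well defined and uniformly bounded in $t$. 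Collecting the powers of $\eps$ yields the quadrature bound $c\,h^m\eps^{-(m+s)}\|y\|_{W^{m,p}}$.

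The main obstacle is precisely this $\eps$-power bookkeeping in the quadrature term: one must track simultaneously the $m$ differentiations in the node variable and the $s$ differentiations in the evaluation variable $x$, both acting on the strongly concentrated kernel $\delta_\eps$, and verify that the chain-rule factors coming from the flow map $\Phi$ do not spoil the $h^m$ rate. The hypotheses $k=\max(m+s,r)$, $v\in L^\infty((0,T);W^{m+1,\infty})$ and $\delta_\eps\in W^{m+s,1}$ are exactly what is needed to close these estimates, the underlying order-$m$ quadrature error bound on the reference nodes being the standard consistency property of the particle distribution, cf.\ \cite{Raviart}. Combining the two contributions via the triangle inequality then gives the stated estimate.
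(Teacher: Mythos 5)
Your overall route --- inserting $y*\delta_\eps$, treating $y - y*\delta_\eps$ as a mollification error and $y*\delta_\eps - \Pi^h_\eps(t) y$ as a quadrature error, pulling the quadrature back to the reference configuration via $\omega_i(t)=\detGradPhi(X_i,t)\,\omega_i(0)$, and then counting powers of $\eps$ --- is exactly the classical Raviart argument, which is also what the paper's ``proof'' amounts to: the paper gives no self-contained argument at all, only citations to \cite{Raviart2} and \cite{MyPhd}. The quadrature half of your sketch is structurally sound: the hypotheses $v\in L^\infty((0,T);W^{m+1,\infty}(\mathbb R))$ and $\delta_\eps\in W^{m+s,1}(\mathbb R)$ are precisely what makes the Leibniz/chain-rule expansion of the transported integrand work, and since $\|\partial^j\delta_\eps\|_{L^1}=O(\eps^{-j})$, the worst term (all $m+s$ derivatives landing on the kernel) indeed produces $h^m\eps^{-(m+s)}\|y\|_{W^{m,p}}$.

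The smoothing half, however, contains a genuine gap. Differentiation does commute with convolution, so Lemma \ref{lem:convolutionEstimate} applied to $\partial^\beta y$ gives $\|\partial^\beta y - (\partial^\beta y)*\delta_\eps\|_{L^p}\le c\,\eps^r|\partial^\beta y|_{W^{r,p}}$; but $|\partial^\beta y|_{W^{r,p}}$ is a seminorm of $y$ of order $r+|\beta|$, not of order $r$. Summing over $|\beta|\le s$ therefore yields $\|y-y*\delta_\eps\|_{W^{s,p}}\le c\,\eps^r\sum_{j=0}^{s}|y|_{W^{r+j,p}}$, i.e.\ a bound in terms of $\|y\|_{W^{r+s,p}}$, and \emph{not} ``the claimed factor $\eps^r|y|_{W^{r,p}}$'' as you assert. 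This is not cosmetic: for $s\ge 1$ the inequality $\|y-y*\delta_\eps\|_{W^{s,p}}\le c\,\eps^r|y|_{W^{r,p}}$ is false in general --- test it on a localised oscillation of frequency $\lambda\sim\eps^{-1}$, for which the left-hand side is of size $\eps^{r-s}|y|_{W^{r,p}}$ while the right-hand side is $\eps^{r}|y|_{W^{r,p}}$. Moreover, the regularity $y\in W^{r+s,p}$ that your argument actually consumes is not guaranteed by the stated hypothesis $k=\max(m+s,r)$ unless $m\ge r$. To close the proof you must either establish the smoothing estimate with $|y|_{W^{r+s,p}}$ (or the sum of seminorms) on the right-hand side and adjust the regularity assumption accordingly --- which is what the cited references effectively do, and which still yields the $\eps$- and $h$-rates used later in theorem \ref{thm:convergenceForward} --- or supply an argument showing how the extra $s$ derivatives can be absorbed; your current step simply asserts the stronger bound without justification.
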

\begin{proof}
 See \cite{Raviart2} for the estimation of the seminorm or
 \cite{MyPhd} for the above estimate of the norm.
 
\end{proof}
\noindent More details about the analytical background of this method can be found in
\cite{Raviart,Raviart2,MyPhd}.

\section{Discretisation of the Optimal Control Problem}
\label{sec:DiscrOptimisation}
In this section we state the discretisation of the forward problem by a
particle method and the corresponding optimal control problem.
We derive the discretisation error of the forward and adjoint system and estimate the discrepancy between the optimal control function obtained by the analytical approach and the one obtained by the particle approach.

First we discretise the forward system \eqref{equ:state} by the method described in the previous section. 
For this, we introduce the spaces $\PhiSpace := (H^1(0,T))^N$ and $\PhiSpaceL := (L^2(0,T))^N$ with the following inner products and norms
\begin{align*}
 \< \xi,\eta\>_\PhiSpaceL &:= \sum_{i=1}^N h_i \int_0^T \xi_i(t)\eta_i(t)\,dt 
 &\mbox{ and }&&
 \|\xi\|_\PhiSpaceL &:= \sqrt{\<\xi,\xi\>_\PhiSpaceL} \\
 \< \xi,\eta\>_\PhiSpace &:= \<\xi,\eta\>_\PhiSpaceL 
  + \<\partial_t\xi,\partial_t\eta\>_\PhiSpaceL
 &\mbox{ and }&&
 \|\xi\|_\PhiSpace &:= \sqrt{\<\xi,\xi\>_\PhiSpace}
\end{align*}
for $\xi,\eta\in \PhiSpace$. Here, $h_i$ denotes the initial point distance.
We set
\begin{align*}
  y_h(x,t) := \sum_{i=1}^N \alpha_i(t) \delta_\eps(x-\Phi_i(t))
\end{align*}
where the particle positions $\Phi_i$ are given by
\begin{align*}
  \partial_t \Phi(t) &= y_h(\Phi(t),t)
\end{align*}
for $\Phi\in P$. Then we get the particle representation
\begin{align}
 \label{equ:discrState}
 \begin{aligned}
 \<\partial_t y_h(t),\phi\>_{V^*,V} + \<y_h(t)\partial_x y_h(t),\phi\>_H
  + \nu\<\partial_x y_h(t),\partial_x \phi\>_H &= \<\calB u(t),\phi\>_H \\
 \<y(0),\psi\>_H &= \<y_0,\psi\>_H \\
 \end{aligned}
\end{align}
for all $\phi\in V$, $\psi\in H$.

\begin{remark}
 For the numerical implementation we use, similar to the finite element method, test functions of the form
 \begin{align*}
  \phi(x,t) := \sum_{i=1}^N a_i(t) \delta_\eps(x-\Phi_i(t))  
 \end{align*}
 which yield mass matrices.
 Moreover, we only discretise the support of the initial value (plus neighbourhood), i.e. if $\overline{\supp y_0}=[a,b]$ then for small $\tilde\eps>0$
 \[  a-\tilde\eps \le  X_i \le b+\tilde\eps\]
 holds for all $i=1,\ldots,N$.
\end{remark}

The optimisation is performed by a ``first optimise, then discretise'' approach \cite{JM_NM_RP_Basics}. Hence, we discretise the adjoint equation \eqref{equ:adjoint} separately. In order to avoid interpolations we choose the same point set as obtained by the forward system. In particular, we get for
\begin{align}
 \label{equ:phRepresent}
   p_h(x,t) := \sum_{i=1}^N \beta_i(t) \delta_\eps(x-\Phi_i(t))
\end{align}
the particle representation of the adjoint equation as
\begin{align}
 \label{equ:adjointEquationDiscr}
 \begin{aligned}
  \<\partial_t p_h(t),\phi\>_{V^*,V} + \<y_h(t)\partial_x p_h(t), \phi\>_H - \nu\<\partial_x p_h(t),\partial_x\phi\>_H &= 0 \\
  \< p_h(T), \psi\>_H &= \< y_d - y_h(T), \psi\>_H
 \end{aligned}
\end{align}
for all $\phi\in V$ and $\psi\in H$.
The discrete minimisation problem is then
\begin{flalign*}
 &\mbox{\minDiscr}\hspace*{30mm}
 \mbox{minimise }J(y_h,u)
 \qquad\mbox{subject to}\qquad
 \eqref{equ:discrState} &
\end{flalign*}

\noindent First, we show the existence of a unique discrete solution $(y_h,\Phi)$ to \eqref{equ:discrState} and its boundedness.

\begin{Assumption}
 \label{asp:existenceForwardDiscr}
 Let $y_0\in \compactVspace$ and $u\in U_{ad}$. Then the discrete problem \eqref{equ:discrState} has a unique solution $(y_h,\Phi)\in W(V)\cap L^\infty(V) \times P$.
\end{Assumption}

\begin{Assumption}
 Let $(y_h,\Phi,u)\in X_h$ be the solution to \eqref{equ:discrState}.
 Then the discrete adjoint equation \eqref{equ:adjointEquationDiscr} has a unique solution $p_h\in W(V)$.
\end{Assumption}

\begin{Theorem}
 \label{thm:boundedForwardDiscr}
 Let $y_0 \in \compactVspace$ and $u\in U_{ad}$. Furthermore, let $(y_h,\Phi)$ be as stated in theorem \ref{thm:interpolationError}.
 Then the discrete solution $(y_h,\Phi)\in W(V)\cap L^\infty(V) \times \PhiSpace$ satisfies
  \[ \|y_h\|_{L^\infty(V)} + \| y_h \|_{W(V)}  \le \Cstate   \]
 and
  \[ \|\Phi\|_\PhiSpace \le  \Cstate + C_{supp} \]
 holds for a constant $C_{supp}>0$ depending on $\compactVspace$ only and $\Cstate>0$ as defined in theorem \ref{thm:boundedForward}.
\end{Theorem}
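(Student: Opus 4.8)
The key structural observation is that the discrete state equation \eqref{equ:discrState} is formally identical to the continuous weak formulation \eqref{equ:weakState}: the only change is that the state is now the spatially smooth function $y_h(\cdot,t)=\sum_i\alpha_i(t)\delta_\eps(\cdot-\Phi_i(t))$, which by Assumption \ref{asp:existenceForwardDiscr} already lies in $W(V)\cap L^\infty(V)$. Hence my first step is to replay the proof of theorem \ref{thm:boundedForward} verbatim with $y$ replaced by $y_h$. That proof uses only three properties of the state: (i) that $\<\partial_t y_h,\phi\>_{V^*,V}=\tfrac{d}{dt}\<y_h,\phi\>_H$, valid since $y_h\in W(V)$; (ii) the nonlinearity cancellation $\<y_h\partial_x y_h,y_h\>_H=0$; and (iii) enough regularity to test against $y_h$, $\RieszIsoAlt\partial_t y_h$ and $-\partial_{xx}y_h$. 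Property (ii) is again the pure integration-by-parts identity $\int_{\mathbb R} y_h^2\partial_x y_h\,dx=\tfrac13\int_{\mathbb R}\partial_x(y_h^3)\,dx=0$, whose boundary terms vanish because $\delta_\eps$, and hence $y_h$, decays at infinity; property (iii) is immediate since $y_h(\cdot,t)\in C^\infty$. Therefore the estimates \eqref{equ:ddtyEst}--\eqref{equ:yL2Vest}, the $W(V)$ estimate and the $L^\infty(V)$ estimate all reproduce \emph{with the same constants}, which gives $\|y_h\|_{L^\infty(V)}+\|y_h\|_{W(V)}\le\Cstate$.

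For the bound on $\Phi$ I would work directly from the characteristic ODE $\partial_t\Phi_i(t)=y_h(\Phi_i(t),t)$ with $\Phi_i(0)=X_i$, and estimate the two contributions to $\|\Phi\|_\PhiSpace^2=\|\Phi\|_\PhiSpaceL^2+\|\partial_t\Phi\|_\PhiSpaceL^2$ separately. For the velocity part I use the pointwise bound $|\partial_t\Phi_i(t)|=|y_h(\Phi_i(t),t)|\le\|y_h(t)\|_{L^\infty}\le c\|y_h(t)\|_V$ from the embedding $V\hookrightarrow L^\infty$, so that
\[
 \|\partial_t\Phi\|_\PhiSpaceL^2
 =\sum_{i=1}^N h_i\int_0^T |y_h(\Phi_i(t),t)|^2\,dt
 \le c^2\Big(\sum_{i=1}^N h_i\Big)\,\|y_h\|_{L^2(V)}^2 .
\]
The factor $\sum_i h_i$ is the total length of the (slightly enlarged) initial point cloud, which by the localisation remark sits inside $[a-\tilde\eps,b+\tilde\eps]$; it is therefore bounded by a constant depending only on $\compactVspace$, and $\|y_h\|_{L^2(V)}\le\|y_h\|_{W(V)}\le\Cstate$ by the first part.

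For the position part I integrate the ODE, giving $|\Phi_i(t)|\le|X_i|+\int_0^T|y_h(\Phi_i(s),s)|\,ds\le|X_i|+c\sqrt{T}\,\|y_h\|_{L^2(V)}$ by Cauchy--Schwarz, where $\max_i|X_i|$ is again controlled by the initial support. Squaring, integrating in $t$ and summing against the weights $h_i$ yields $\|\Phi\|_\PhiSpaceL^2\le T\big(\sum_i h_i\big)\big(\max_i|X_i|+c\sqrt{T}\,\Cstate\big)^2$. Collecting both contributions, absorbing every quantity depending only on $\compactVspace$ (namely $\sum_i h_i$ and $\max_i|X_i|$) into $C_{supp}$ and using $\|y_h\|_{L^2(V)}\le\Cstate$ then produces $\|\Phi\|_\PhiSpace\le\Cstate+C_{supp}$.

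The algebra in the two ODE estimates is routine; the points that require genuine care are verifying that the cancellation and the three test-function choices remain legitimate for the smooth discrete state $y_h$ (so that the proof of theorem \ref{thm:boundedForward} transfers with \emph{unchanged} constants), and bookkeeping the quadrature weights $\sum_i h_i$ and the initial positions $X_i$ so that they are controlled by the data of $\compactVspace$ alone and can honestly be hidden inside $C_{supp}$.
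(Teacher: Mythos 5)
Your proposal is correct and follows essentially the same route as the paper: the $y_h$ bounds are obtained by replaying the proof of theorem \ref{thm:boundedForward} for the smooth discrete state, and the $\Phi$ bound comes from the characteristic ODE via the embedding $V\hookrightarrow L^\infty$ (your pointwise estimate of $\partial_t\Phi_i$ is the same inequality the paper gets by testing the weighted weak form with $\xi=\partial_t\Phi$), followed by integrating the ODE and controlling $\Phi_i(0)$ by the initial support. Your version is in fact slightly more careful than the paper's, since you make explicit both why the continuous argument transfers (cancellation of the nonlinearity, admissible test functions) and the bookkeeping of $\sum_i h_i$ and $\max_i|X_i|$ inside $C_{supp}$, which the paper leaves implicit.
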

\begin{proof}
 The estimation of $\|y_h\|_{L^\infty(V)} + \| y_h \|_{W(V)}$ is analogous to the proof of theorem \ref{thm:boundedForward}, in particular
 \begin{align*}
  \|y_h\|_{L^\infty(V)} + \| y_h \|_{W(V)} \le C( \|u\|_{L^2(0,T)} + \|y_0\|_V )^2 \exp( C(\|u\|_{L^2(0,T)}^2 + \|y_0\|_H^2) )
 \end{align*}
 For all $\xi\in \PhiSpaceL$ we have
 \begin{align*}
  \<\partial_t \Phi,\xi\>_\PhiSpaceL = \<y_h(\Phi,\cdot),\xi\>_\PhiSpaceL
 \end{align*}
 which yields for $\xi=\partial_t\Phi$
 \begin{align*}
  \|\partial_t\Phi\|_\PhiSpaceL^2 \le c\|y_h\|_{L^2(L^\infty)}\|\partial_t\Phi\|_\PhiSpaceL 
  \le c\|y_h\|_{W(V)}\|\partial_t\Phi\|_\PhiSpaceL
 \end{align*}
 and hence
 \begin{align*}
  \|\partial_t\Phi\|_\PhiSpaceL \le  c\|y_h\|_{W(V)}
  \le \Cstate
 \end{align*}
 Moreover,
 \begin{align*}
  \Phi_i(t) = \int_0^t y_h(\Phi_i(\tau),\tau) \,d\tau + \Phi_i(0)
  \;\Rightarrow\;
  \|\Phi\|_\PhiSpaceL \le C\|y_h\|_{L^2(L^\infty)} + C
 \end{align*}
 as $\Phi_i(0)<C$ depending on the support of the initial condition, cf. previous remark.
 
\end{proof}

\begin{Theorem}
 \label{thm:convergenceForward}
 Let $y\in W(H^m)\cap L^\infty(V)$, $m\ge 1$ be the solution to the continuous
 system and $y_h\in W(V)\cap L^\infty(V)$ the solution to the
 discrete system. Then
  \[ \| y - y_h \|_{W(V)} \le C (\errorH) \]
 holds.
\end{Theorem}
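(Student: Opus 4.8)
\noindent\emph{Proof strategy.} The plan is the classical one for semidiscrete error estimates: I would insert the smooth particle interpolant $w := \Pi^h_\eps(t) y$ of the exact solution as an intermediate object, split the error via the triangle inequality as $y - y_h = \theta + \rho$ with $\theta := y - w$ and $\rho := w - y_h$, bound the interpolation part $\theta$ directly by Theorem \ref{thm:interpolationError}, and bound the remaining discrete part $\rho$ by an energy estimate combined with Gronwall's lemma. For $\theta$, Theorem \ref{thm:interpolationError} with $s = 1$ and $p = 2$, together with the spatial regularity $y \in W(H^m)$ and the bounds of Theorem \ref{thm:boundedForward}, already gives $\|\theta\|_{L^2(V)} \le C(\errorH)$; the appearance of $\eps^{m+1}$ (rather than $\eps^m$) is precisely due to the fact that the $W(V) = W(H^1)$ norm forces the $s = 1$ case of the interpolation estimate.

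Next I would derive the error equation for $\rho$. Subtracting the discrete weak form \eqref{equ:discrState} from the weak form \eqref{equ:weakState} of the exact solution, writing $y - y_h = \theta + \rho$ and moving the contributions carried by $\theta$ to the right-hand side, gives for all $\phi \in V$
\begin{align*}
 \<\partial_t\rho(t),\phi\>_{V^*,V} + \nu\<\partial_x\rho(t),\partial_x\phi\>_H
  + \<y\partial_x y - y_h\partial_x y_h,\phi\>_H = \<R(t),\phi\>_{V^*,V},
\end{align*}
where $R$ collects the time-derivative and diffusion terms carried by $\theta$. Estimating these through $\theta$ and $\partial_x\theta$ and invoking Theorem \ref{thm:interpolationError} (with $s = 1$ for the diffusion part and $s = 0$ for the time-derivative part) yields $\|R\|_{L^2(V^*)} \le C(\errorH)$. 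The initial contribution satisfies $\|\rho(0)\|_H \le C(\errorH)$, being controlled by the interpolation error of the data via Lemma \ref{lem:convolutionEstimate} and Theorem \ref{thm:interpolationError}.

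The heart of the argument is the energy estimate, which I would run exactly as in the proof of Theorem \ref{thm:boundedForward}. Testing with $\phi = \rho$ and rewriting the convective difference in conservative form,
\begin{align*}
 \<y\partial_x y - y_h\partial_x y_h,\rho\>_H
  &= \tfrac12\<\partial_x\big((y+y_h)(y-y_h)\big),\rho\>_H \\
  &= -\tfrac12\<(y+y_h)(\theta+\rho),\partial_x\rho\>_H,
\end{align*}
is the crucial manipulation: moving the derivative onto the test function avoids any $L^\infty$-control of $\partial_x y_h$, which would otherwise degrade like $\eps^{-1}$, and leaves only the factor $\|y + y_h\|_{L^\infty} \le 2\Cstate$, uniformly bounded by Theorems \ref{thm:boundedForward} and \ref{thm:boundedForwardDiscr} via $V \hookrightarrow L^\infty$. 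Young's inequality then absorbs every $\|\rho\|_V$ factor into the diffusion term $\nu\|\rho\|_V^2$, throws the $\theta$-contributions into the residual, and leaves a $\|\rho\|_H^2$ term for Gronwall. Integrating in time and applying Gronwall's lemma gives
\begin{align*}
 \|\rho\|_{L^\infty(H)}^2 + \nu\|\rho\|_{L^2(V)}^2
  \le C\big(\|\rho(0)\|_H^2 + \|R\|_{L^2(V^*)}^2\big) \le C(\errorH)^2.
\end{align*}

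Finally I would bound the time derivative by taking the $V^*$ norm directly in the error equation: since $\<\partial_t\rho,\phi\>_{V^*,V} = -\nu\<\partial_x\rho,\partial_x\phi\>_H - \tfrac12\<(y+y_h)(y-y_h),\partial_x\phi\>_H + \<R,\phi\>_{V^*,V}$, taking the supremum over $\|\phi\|_V \le 1$ and using the uniform $L^\infty$ bounds yields $\|\partial_t\rho\|_{L^2(V^*)} \le C(\|\rho\|_{L^2(V)} + \|y - y_h\|_{L^2(H)} + \|R\|_{L^2(V^*)})$, hence $\|\rho\|_{W(V)} \le C(\errorH)$. The triangle inequality together with the bound on $\theta$ then gives the claim. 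The main obstacle is the nonlinear convection term: one must decompose it so that the highest-order factor always acts on the test function rather than on $y_h$, keep careful track of which terms belong to the residual, and match the $\eps$-powers of the residual to $h^m/\eps^{m+1}$, which forces consistent use of the $s = 1$ case of Theorem \ref{thm:interpolationError} and of the regularity $y \in W(H^m)$.
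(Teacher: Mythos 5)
Your overall architecture (particle interpolant as comparison function, conservative rewriting of the convection term so that the derivative falls on the test function, energy estimate plus Gronwall, separate duality bound for the time derivative) parallels the paper closely, but the $\theta$--$\rho$ splitting creates a term that none of the cited tools can control, and this is a genuine gap. Forming the equation for $\rho = \Pi^h_\eps(t)y - y_h$ necessarily puts $\partial_t\theta = \partial_t\big(y - \Pi^h_\eps(t)y\big)$ into the residual $R$, and you claim $\|R\|_{L^2(V^*)}\le C(\errorH)$ by invoking Theorem \ref{thm:interpolationError} ``with $s=0$ for the time-derivative part''. That theorem, however, only bounds the spatial norms $\|y-\Pi^h_\eps(t)y\|_{W^{s,p}}$ at each fixed $t$; it says nothing about the time derivative of the interpolation error. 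Unlike the time-independent projections of finite element theory, $\Pi^h_\eps(t)$ does not commute with $\partial_t$: the nodes $\Phi_i(t)$ are transported by the discrete velocity and the weights $\omega_i(t)$ evolve through the Jacobian, so $\partial_t\big(\Pi^h_\eps(t)y\big)$ contains terms of the form $\dot\Phi_i\, y(\Phi_i(t),t)\,\delta_\eps'(x-\Phi_i(t))\,\omega_i(t)$, i.e.\ kernel derivatives of size $\eps^{-1}$, plus material derivatives of $y$ along the discrete flow. Bounding $\|\partial_t\theta\|_{L^2(V^*)}$ at the rate $\errorH$ would be a new consistency estimate, not a corollary of Lemma \ref{lem:convolutionEstimate} or Theorem \ref{thm:interpolationError}. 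The same uncontrolled quantity is needed a second time at the end: your triangle inequality in the $W(V)$ norm requires $\|\theta\|_{W(V)}$, hence again $\|\partial_t\theta\|_{L^2(V^*)}$. Integrating $\<\partial_t\theta,\rho\>$ by parts in time does not rescue the argument either, since the resulting term $\<\theta,\partial_t\rho\>$ needs a prior bound on $\|\partial_t\rho\|_{L^2(V^*)}$, whose derivation from the $\rho$-equation again involves $R$ and thus $\partial_t\theta$.

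The paper's proof is organized precisely to avoid ever differentiating the interpolant in time. It keeps the error equation in $z = y - y_h$ and chooses as test function exactly your $\rho$, written as $\phi_h = \phi_I + z$: first with $\phi_I = \Pi^h_\eps\RieszIsoAlt\partial_t y - \RieszIsoAlt\partial_t y$, which yields $\|\partial_t z\|_{L^2(V^*)}\le C\big(\|\phi_I\|_{L^2(V)} + \|z\|_{L^2(V)}\big)$, and then with $\phi_I = \Pi^h_\eps y - y$ for the energy estimate. The fork relative to your proof lies in a single pairing: the paper splits $\<\partial_t z,\rho\> = \tfrac12\tfrac{d}{dt}\|z\|_H^2 + \<\partial_t z,\phi_I\>$, so the time-derivative residual is $\|\partial_t z\|_{V^*}\|\phi_I\|_V$, controlled by the first step together with purely spatial interpolation estimates at fixed time; your split $\<\partial_t z,\rho\> = \tfrac12\tfrac{d}{dt}\|\rho\|_H^2 + \<\partial_t\theta,\rho\>$ produces the uncontrollable term instead. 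To repair your proof you must either establish the missing estimate on $\partial_t\big(y-\Pi^h_\eps(t)y\big)$ or re-split the time-derivative pairing as the paper does, at which point the two arguments coincide.
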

\begin{proof}
 The continuous solution $y\in W(V)$ satisfies
 \[
   \<\partial_t y(t),\phi\>_{V^*,V} +
   \<y(t)\partial_x y(t), \phi\>_H + \nu \<\partial_x y(t),\partial_x\phi\>_H 
    = \< \calB u(t), \phi\>_{V^*,V}
 \]
 for all $\phi\in V$ and the discrete solution $y_h\in W(V)$
 \[
   \<\partial_t y_h(t),\phi_h\>_{V^*,V} +
   \<y_h(t)\partial_x y_h(t), \phi_h\>_H 
   + \nu \<\partial_x y_h(t),\partial_x\phi_h\>_H 
    = \< \calB u(t), \phi_h\>_{V^*,V}
 \] 
 for all $\phi_h\in V$.
 Hence, by using the fact that
 \begin{align*}
  \<y(t)\partial_x y(t),\phi\>_H = -\<y(t)\partial_x y(t),\phi\>_H
  -\<y(t)^2,\partial_x\phi\>_H
 \end{align*}
 and with $z:=y-y_h$ we get
 \begin{align*}
   \<\partial_t z(t),\phi_h(t)\>_{V^*,V}
   -\frac12 \<(y(t)+y_h(t))z(t),\partial_x\phi_h(t)\>_H 
   + \nu \<\partial_x z(t),\partial_x\phi_h(t)\>_H 
    = 0
 \end{align*}
 We start with estimating $\|\partial_t z\|_{L^2(V^*)}$ by setting
 \[ \phi_h = 
   \underbrace{\Pi_\eps^h \RieszIsoAlt\partial_t y - \RieszIsoAlt\partial_t y}_{=:\phi_I}
   + \underbrace{\RieszIsoAlt\partial_t y
   - \RieszIsoAlt\partial_t y_h}_{=\RieszIsoAlt\partial_t z}
 \]
 \begin{align*}
  \|\partial_t z\|_{L^2(V^*)}^2 &=
   \<\partial_t z,\RieszIsoAlt\partial_t z\>_{L^2(V^*),L^2(V)}
   = -\<\partial_t z,\phi_I\>_{L^2(V^*),L^2(V)} 
    + \<\partial_t z,\phi_h\>_{L^2(V^*),L^2(V)}\\
  &= -\<\partial_t z,\phi_I\>_{L^2(V^*),L^2(V)} 
   - \nu \<\partial_x z,\partial_x \phi_h\>_{L^2(H)}
   +\frac12 \<(y + y_h)z , \partial_x\phi_h\>_{L^2(H)} \\
  &\le \|\partial_t z\|_{L^2(V^*)}\|\phi_I\|_{L^2(V)}
   + C \|z\|_{L^2(V)}\|\phi_h\|_{L^2(V)}
   + C\|y+y_h\|_{L^2(L^\infty)}\|z\|_{L^2(H)}\|\phi_h\|_{L^2(V)} \\
  &\le \|\partial_t z\|_{L^2(V^*)}\|\phi_I\|_{L^2(V)}
   + C\|z\|_{L^2(V)}(\|\phi_I\|_V + \|\partial_t z\|_{L^2(V^*)})
 \end{align*}
 as $\|\phi_h(t)\|_V\le \|\phi_I\|_V+\|\partial_t z\|_{V^*}$ and $V\hookrightarrow H$.
 Using Young's inequality gives
 \begin{align*}
  \|\partial_t z\|_{L^2(V^*)}^2 &\le \gamma_1\|\partial_t z\|_{L^2(V^*)}^2
   + C_{\gamma_1}\|\phi_I\|_{L^2(V)}^2
   + C(\|z\|_{L^2(V)}^2 + \|\phi_I\|_{L^2(V)}^2) \\
   &\quad+ C_{\gamma_2}\|z\|_{L^2(V)}^2 + C \gamma_2 \|\partial_t z\|_{L^2(V^*)}^2
 \end{align*}
 an by choosing $\gamma_1 = \frac14$ and $\gamma_2 = \frac{1}{4C}$ we obtain
 \begin{align*}
  \frac12 \|\partial_t z\|_{L^2(V^*)}^2 
  &\le C( \|\phi_I\|_{L^2(V)}^2 + \|z\|_{L^2(V)}^2 )
 \end{align*}
 which yields
 \begin{align}
  \label{equ:y-y_hVstar}
  \|\partial_t z\|_{L^2(V^*)} \le C(\|\phi_I\|_{L^2(V)} + \|z\|_{L^2(V)})
 \end{align}
 To estimate the $L^2(V)$ error of $y-y_h$ we set $\phi_h=\phi_I+z$ where 
 $\phi_I = \Pi^h_\eps y - y$. 
 \begin{align*}
  \frac{1}{2}\frac{d}{dt}\|z(t)&\|_H^2 
   + \nu\|z(t)\|_V^2 
   =
   \nu\|z(t)\|_H^2 +
   \frac12 \<(y(t)+y_h(t))z(t), \phi_I(t)+z(t)\>_H \\
   &\hspace{30mm}- \nu \<\partial_xz(t),\partial_x\phi_I(t)\>_H
   - \<\partial_t z(t),\phi_I(t)\>_{V^*,V}  \\
  &\le \nu\|z(t)\|_H^2
   + C\|y(t)+y_h(t)\|_{L^\infty}\|z(t)\|_H\|\phi_I(t)+z(t)\|_H
   + C \|z(t)\|_V\|\phi_I(t)\|_V \\
   &\quad + \|\partial_t z(t)\|_{V^*}\|\phi_I(t)\|_V \\
  &\le (\nu+C)\|z(t)\|_H^2  + C\|z(t)\|_H\|\phi_I(t)\|_H
   + C \|z(t)\|_V\|\phi_I(t)\|_V
   + \|\partial_t z(t)\|_{V^*}\|\phi_I(t)\|_V
  \intertext{as $\|y_h(t)+y(t)\|_{L^\infty}\le C$ and due to Young's inequality}
  &\le C\|z(t)\|_H^2 + C\|\phi_I(t)\|_V^2 
   + C\gamma_1\|z(t)\|_V^2 + C_{\gamma_1} \|\phi_I(t)\|_V^2 
   + \gamma_2\|\partial_t z(t)\|_{V^*}^2 + C_{\gamma_2}	\|\phi_I(t)\|_V^2
 \end{align*}
 holds.
 Setting $\gamma_1 = \frac{\nu}{2C}$ and integrating over $(0,T)$ yields
 \begin{align*}
  \|z(T)\|_H^2 - \|z(0)\|_H^2 + \nu \|z\|_{L^2(V)}^2
  \le C_{\gamma_2}\|\phi_I\|_{L^2(V)}^2 + \gamma_2 \|\partial_t z\|_{L^2(V^*)}^2 
  + C\int_0^T \|z(t)\|_H^2\,dt
 \end{align*}
 Using \eqref{equ:y-y_hVstar} we obtain
 \begin{align*}
  \|z(T)\|_H^2 - \|z(0)\|_H^2 + \nu \|z\|_{L^2(V)}^2
  \le C_{\gamma_2}\|\phi_I\|_{L^2(V)}^2 + \gamma_2 C \|z\|_{L^2(V)}^2 
  + C\int_0^T \|z(t)\|_H^2\,dt
 \end{align*}
 and therefore
 \begin{align*}
  \|z(T)\|_H^2 +  \frac\nu2 \|z\|_{L^2(V)}^2
  \le C\|\phi_I\|_{L^2(V)}^2 + \|z(0)\|_H^2 + C\int_0^T \|z(t)\|_H^2\,dt
 \end{align*} 
 by setting $\gamma_2 = \frac{\nu}{2C}$.
 Due to Gronwall's lemma we get
 \begin{align*}
  \|z\|_{L^2(V)}^2 \le C \big(\|\phi_I\|_{L^2(V)}^2 + \|z(0)\|_H^2 \big)
 \end{align*}
 and hence
 \begin{align*}
  \|z\|_{L^2(V)} \le C \big(\|\phi_I\|_{L^2(V)} + \|z(0)\|_H \big).
 \end{align*} 
 Now we consider the right hand side terms.
 \begin{align*}
  \|\phi_I\|_{L^2(V)} \le C(\errorH)
 \end{align*}
 due to theorem \ref{thm:interpolationError}.
 The second term is given by
 \begin{align*}
  \|z(0)\|_H = \|y(0)-y_h(0)\|_H = \|y(0)-y_I(0)\|_H = \|\phi_I(0)\|_H \le C(\errorL).
 \end{align*}
 due to theorem \ref{thm:interpolationError} again and the fact that the discrete initial value is defined by $y_h(0) = \Pi_\eps^h y_0$.
 
 Combining all results finally yields the assumption.
 
\end{proof}

\begin{Theorem}
 There exists an optimal solution to \minDiscr.
\end{Theorem}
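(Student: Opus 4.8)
The plan is to mirror the proof of theorem \ref{thm:existOptimalSolution}, replacing the continuous state by the discrete state $y_h$ and carrying the particle positions $\Phi$ as an additional variable constrained by the characteristic equation. First I would note that by assumption \ref{asp:existenceForwardDiscr} every $u\in U_{ad}$ admits a unique discrete solution $(y_h,\Phi)\in W(V)\cap L^\infty(V)\times\PhiSpace$, and since $J(y_h,u)\ge 0$ the infimum $j_h:=\inf_{u\in U_{ad}}J(y_h(u),u)$ exists in $\mathbb R_0^+$. I would then fix a minimising sequence $(y_h^n,\Phi^n,u_n)$ satisfying \eqref{equ:discrState} together with $\partial_t\Phi^n=y_h^n(\Phi^n,\cdot)$ and $J(y_h^n,u_n)\to j_h$.

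Next I would extract convergent subsequences from the a priori bounds. As $U_{ad}\subset L^\infty(0,T)$ is bounded and weakly closed, a subsequence satisfies $u_n\rightharpoonup^*\bar u$ in $L^\infty(0,T)$ with $\bar u\in U_{ad}$. By theorem \ref{thm:boundedForwardDiscr} the states are bounded, $\|y_h^n\|_{L^\infty(V)}+\|y_h^n\|_{W(V)}\le\Cstate$, so $y_h^n\rightharpoonup\bar y_h$ in $W(V)$ and, exactly as in theorem \ref{thm:existOptimalSolution} via $V\hookrightarrow C^{0,\alpha}(\mathbb R)$ and Banach--Alaoglu, $y_h^n(t)\rightharpoonup^*\bar y_h(t)$ and hence $y_h^n(t)^2\rightharpoonup^*\bar y_h(t)^2$ in $L^\infty(\mathbb R)$ for a.e. $t$. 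The new ingredient is the positions: theorem \ref{thm:boundedForwardDiscr} gives $\|\Phi^n\|_\PhiSpace\le\Cstate+C_{supp}$, so $\Phi^n\rightharpoonup\bar\Phi$ in $\PhiSpace=(H^1(0,T))^N$, and since $N$ is fixed and $H^1(0,T)$ embeds compactly into $C([0,T])$, each component converges uniformly, $\Phi_i^n\to\bar\Phi_i$ in $C([0,T])$; in addition one extracts a weak limit $\bar\alpha$ of the (bounded) coefficients in the ansatz $y_h^n=\sum_i\alpha_i^n\delta_\eps(\cdot-\Phi_i^n)$.

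With these convergences I would pass to the limit in both constraints. In \eqref{equ:discrState} the terms $\<\partial_t y_h^n,\phi\>$ and $\nu\<\partial_x y_h^n,\partial_x\phi\>$ pass by weak convergence in $W(V)$; the Burgers nonlinearity is rewritten as $\<y_h^n\partial_x y_h^n,\phi\>=-\tfrac12\<(y_h^n)^2,\partial_x\phi\>$ and handled by $(y_h^n)^2\rightharpoonup^*\bar y_h^2$; and $\<\calB u_n,\phi\>\to\<\calB\bar u,\phi\>$ follows from the duality used for \eqref{equ:existOpt1} in theorem \ref{thm:existOptimalSolution}. This yields that $\bar y_h$ solves \eqref{equ:discrState} for the control $\bar u$. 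For the characteristic equation I would use $\partial_t\Phi^n\rightharpoonup\partial_t\bar\Phi$ in $\PhiSpaceL$ while the driving term $y_h^n(\Phi_i^n(\cdot),\cdot)=\sum_j\alpha_j^n\delta_\eps(\Phi_i^n-\Phi_j^n)$ converges because $\delta_\eps\in C^\infty$ is bounded with bounded derivatives and the finitely many positions converge uniformly; identifying the limit gives $\partial_t\bar\Phi=\bar y_h(\bar\Phi,\cdot)$, so $(\bar y_h,\bar\Phi)$ is the discrete state belonging to $\bar u$.

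Finally, lower semicontinuity of $J$ gives $J(\bar y_h,\bar u)\le\liminf_n J(y_h^n,u_n)=j_h$, whence equality and optimality of $(\bar y_h,\bar\Phi,\bar u)$ for \minDiscr. I expect the main obstacle to be the passage to the limit in the coupled nonlinear pair: the state $y_h^n$ depends on the positions $\Phi^n$ through the smoothed kernels $\delta_\eps(\cdot-\Phi_i^n)$, while the positions themselves solve an ODE driven by $y_h^n$. The decisive point is that weak convergence in $\PhiSpace$ upgrades, via the compact embedding $H^1(0,T)\hookrightarrow C([0,T])$, to uniform convergence of the finitely many positions; combined with the smoothness of $\delta_\eps$ this makes both nonlinearities pass to the limit, which is in fact more benign here than the genuinely infinite-dimensional nonlinear term treated in theorem \ref{thm:existOptimalSolution}.
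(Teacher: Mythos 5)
Your overall skeleton coincides with the paper's: minimising sequence, the a priori bounds of theorem \ref{thm:boundedForwardDiscr}, weak limits for $u_n$, ${y_h}_n$ and $\Phi_n$, strong convergence of the positions by a compact embedding, limit passage in \eqref{equ:discrState} exactly as in theorem \ref{thm:existOptimalSolution}, and lower semicontinuity of $J$. The one place where you depart from the paper is the limit passage in the characteristic equation, and that is precisely where your argument has a gap: you extract ``a weak limit $\bar\alpha$ of the (bounded) coefficients'' in the ansatz ${y_h}_n=\sum_j\alpha_j^n\delta_\eps(\cdot-\Phi_j^n)$, but no available estimate bounds these coefficients. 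Theorem \ref{thm:boundedForwardDiscr} controls $\|{y_h}_n\|_{W(V)}+\|{y_h}_n\|_{L^\infty(V)}$ and $\|\Phi_n\|_\PhiSpace$, not $\alpha^n$; the Gram matrix of the kernels $\delta_\eps(\cdot-\Phi_j^n)$ degenerates when particles approach each other, so boundedness of ${y_h}_n$ does not imply boundedness of $\alpha^n$. Concretely, with $\alpha_1^n=n=-\alpha_2^n$ and $\Phi_1^n-\Phi_2^n\sim 1/n$ one gets ${y_h}_n\approx-\delta_\eps'(\cdot-\Phi_2^n)$, bounded in every norm appearing in the paper while the coefficients diverge. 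Since nothing prevents particle clustering along the minimising sequence, this step is unsupported, and with it your identification $\partial_t\bar\Phi=\bar y_h(\bar\Phi,\cdot)$.

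The paper avoids the coefficients altogether: it splits
\begin{align*}
 {y_h}_n(\Phi_n,\cdot)-\bar y_h(\bar\Phi,\cdot)
 = \big({y_h}_n(\Phi_n,\cdot)-\bar y_h(\Phi_n,\cdot)\big)
 + \big(\bar y_h(\Phi_n,\cdot)-\bar y_h(\bar\Phi,\cdot)\big),
\end{align*}
estimates the second difference by $C\|\Phi_n-\bar\Phi\|_{\PhiSpaceL}^\alpha$ using the H\"older continuity $\bar y_h(t)\in C^{0,\alpha}(\mathbb R)$ furnished by Morrey's embedding $V\hookrightarrow C^{0,\alpha}(\mathbb R)$ together with the strong $\PhiSpaceL$-convergence of the positions, and sends the first difference to zero using the weak-$*$ convergence of ${y_h}_n(t)$ in $L^\infty(\mathbb R)$. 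Your upgrade of the position convergence to uniform convergence via the compact embedding $H^1(0,T)\hookrightarrow C([0,T])$ is correct and even stronger than the $L^2$-convergence the paper uses; if you replace the coefficient argument by the splitting above, your proof closes and then becomes essentially the paper's.
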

\begin{proof}
 Due to theorem \ref{asp:existenceForwardDiscr} there exists a unique
 $(y_h,\Phi)\in W(V)\cap L^\infty(V)\times \PhiSpace$ for every $u\in U_{ad}$.
 Moreover, $J(y_h,u)\ge 0$ for all $y_h\in W(V)$ and $u\in U_{ad}$.
 Hence,
 \begin{align*}
  j = \inf_{u\in U_{ad}} J(y_h(u),u) \in\mathbb R^+_0
 \end{align*}
 exists. We define the minimising sequence 
 $({y_h}_n,\Phi_n,u_n)\in W(V)\times P\times U_{ad}$ by
 \begin{align*}
  J({y_h}_n,u_n)\to j \quad\mbox{ as }\quad n \to \infty
 \end{align*}
 and $({y_h}_n,\Phi_n,u_n)$ solves \eqref{equ:discrState}.
 
 The convergence for the $y_h$ equations are analogous to the proof of theorem \ref{thm:existOptimalSolution}
 As $\Phi$ is bounded in $\PhiSpace$ there exists a subsequence with
 \begin{align*}
  \Phi_n \rightharpoonup \bar\Phi ,\quad n\to\infty\qquad \mbox{in }\PhiSpace
 \end{align*}
 and
 \begin{align*}
   \partial_t\Phi_n \rightharpoonup \partial_t\bar\Phi 
   ,\quad n\to\infty\qquad \mbox{in }\PhiSpaceL
 \end{align*}
 Since the embedding $H^1(0,T)\hookrightarrow L^2(0,T)$ is compact we get
 \begin{align}
  \label{equ:existDiscrOpt6}
  \Phi_n \to \bar\Phi ,\quad n\to\infty\qquad \mbox{in }\PhiSpaceL
 \end{align}
 Moreover, we have for $\xi\in \PhiSpaceL$
 \begin{align*}
  \<{y_h}_n(\Phi_n,\cdot)-\bar{y_h}(\bar\Phi,\cdot),\xi\>_{\PhiSpaceL}
  &=  \<{y_h}_n(\Phi_n,\cdot)-\bar{y_h}(\Phi_n,\cdot),\xi\>_{\PhiSpaceL}
   + \<\bar{y_h}(\Phi_n,\cdot)-\bar{y_h}(\bar \Phi,\cdot),\xi\>_{\PhiSpaceL} \\
  &\le \<{y_h}_n(\Phi_n,\cdot)-\bar{y_h}(\Phi_n,\cdot),\xi\>_{\PhiSpaceL}
   + \|\bar{y_h}(\Phi_n,\cdot)-\bar{y_h}(\bar\Phi,\cdot)\|_{\PhiSpaceL}\|\xi\|_\PhiSpaceL
\\
  &\le \<{y_h}_n(\Phi_n,\cdot)-\bar{y_h}(\Phi_n,\cdot),\xi\>_{\PhiSpaceL}
    + C\|\Phi_n-\Phi\|_{\PhiSpaceL}^\alpha\|\xi\|_\PhiSpaceL
 \end{align*}
 with $0<\alpha<\frac12$ as $y\in W(V)\cap L^\infty(V)$ and hence 
 $y(t)\in C^{0,\alpha}(\mathbb R)$ as the embedding $V\hookrightarrow C^{0,\alpha}(\mathbb R)$ holds
 due to Morrey's lemma, cf. e.g. \cite{Alt}.
 Due to $y_h(t)\rightharpoonup^* {\bar y_h(t)}$ in $L^\infty(\mathbb R)$ and \eqref{equ:existDiscrOpt6} we obtain
 \begin{align*}
  \<{y_h}_n(\Phi_n,\cdot)-\bar{y_h}(\bar\Phi,\cdot),\xi\>_{\PhiSpaceL} \to 0
 \end{align*}
 which yields
 \begin{align*}
  \<\partial_t {\Phi}_n,\xi\>_{\PhiSpaceL} -
  \<{y_h}_n({\Phi}_n,\cdot),\xi\>_{\PhiSpaceL}
  \to \<\partial_t {\bar\Phi},\xi\>_{\PhiSpaceL}
  - \<\bar{y_h}(\Phi,\cdot),\xi\>_{\PhiSpaceL}
 \end{align*}
 for all $\xi\in \PhiSpaceL$.
 
 Combining all above results gives $(\bar y_h,\bar\Phi,\bar u)$ also solves \eqref{equ:discrState}.
 Due to definition $J$ is lower-semi-continuous, i.e.
 \begin{align*}
  J({\bar y_h},\bar u) \le \liminf_{n\to\infty} J({y_h}_n,u_n) = j
 \end{align*}
 As $J({\bar y_h},\bar u)\le j$ we obtain $J({\bar y_h},\bar u) = j$ and hence $j$ is a
 minimum.
 
\end{proof}

\begin{Theorem}
 \label{thm:boundedAdjointDiscr}
 Let $(y_h,\Phi,u)\in X_h$ be the solution to \eqref{equ:discrState} and
 $p_h\in W(V)$ the solution to \eqref{equ:adjointEquationDiscr}.
 Then the estimate
 \begin{align*}
  \|p_h\|_{W(V)} \le c(\Cstate^4+\Cstate^2) \|y_h(T)-y_d\|_H
 \end{align*}
 holds.
\end{Theorem}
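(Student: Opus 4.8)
The plan is to transfer the argument behind the continuous bound in Theorem~\ref{thm:boundedAdjointCont} to the discrete adjoint equation \eqref{equ:adjointEquationDiscr}, but now keeping every occurrence of $\|y_h\|_{L^\infty(V)}$ explicit, since by Theorem~\ref{thm:boundedForwardDiscr} this quantity is bounded by $\Cstate$. As \eqref{equ:adjointEquationDiscr} runs backward in time, I first reverse time exactly as in the proof of Lemma~\ref{lem:lipschitzAdjoint}: writing $\tilde p_h(t):=p_h(T-t)$ and $\tilde y_h(t):=y_h(T-t)$ turns \eqref{equ:adjointEquationDiscr} into the forward problem
\begin{align*}
 \<\partial_t \tilde p_h(t),\phi\>_{V^*,V}
  - \<\tilde y_h(t)\,\partial_x \tilde p_h(t),\phi\>_H
  + \nu\<\partial_x \tilde p_h(t),\partial_x\phi\>_H = 0
  \qquad\forall\phi\in V,
\end{align*}
with initial value $\tilde p_h(0)=y_d-y_h(T)$, so that $\|\tilde p_h(0)\|_H=\|y_h(T)-y_d\|_H$ is precisely the right-hand side of the claim.

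First I would establish the $L^\infty(H)$ and $L^2(V)$ bounds by testing with $\phi=\tilde p_h(t)$. The diffusion term supplies $\nu\|\tilde p_h\|_V^2-\nu\|\tilde p_h\|_H^2$, and the convection term is controlled, using $V\hookrightarrow L^\infty$ together with Theorem~\ref{thm:boundedForwardDiscr}, by
\begin{align*}
 |\<\tilde y_h\,\partial_x\tilde p_h,\tilde p_h\>_H|
  \le c\,\|\tilde y_h\|_V\,\|\tilde p_h\|_V\,\|\tilde p_h\|_H
  \le c\,\Cstate\,\|\tilde p_h\|_V\,\|\tilde p_h\|_H .
\end{align*}
A Young split against $\tfrac\nu2\|\tilde p_h\|_V^2$ leaves a remainder of order $\Cstate^2$ on $\|\tilde p_h\|_H^2$, and Gronwall's lemma followed by an integration in time then yields a bound of the form $\|p_h\|_{L^2(V)}\le c\,\Cstate^2\,\|y_h(T)-y_d\|_H$, the source of the $\Cstate^2$ summand.

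For the time derivative I would mimic the $V^*$ estimate in the proof of Theorem~\ref{thm:boundedForward}, testing \eqref{equ:adjointEquationDiscr} with $\RieszIsoAlt\partial_t p_h$ and using the identity $\|\partial_t p_h\|_{V^*}^2=\<\partial_t p_h,\RieszIsoAlt\partial_t p_h\>_{V^*,V}$. Bounding the convection and diffusion contributions through $\|y_h\|_{L^\infty(V)}\le\Cstate$ and dividing by $\|\partial_t p_h\|_{L^2(V^*)}$ gives $\|\partial_t p_h\|_{L^2(V^*)}\le c\,\Cstate\,\|p_h\|_{L^2(V)}$; inserting the $L^2(V)$ bound of the previous step and over-estimating $\Cstate^3\le\Cstate^4$ (valid since $\Cstate\ge1$) produces the $\Cstate^4$ summand. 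Since the graph norm satisfies $\|p_h\|_{W(V)}\le\|p_h\|_{L^2(V)}+\|\partial_t p_h\|_{L^2(V^*)}$, adding the two contributions yields $\|p_h\|_{W(V)}\le c\,\Cadjoint\,\|y_h(T)-y_d\|_H$, as claimed.

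The main obstacle is not the algebra but the honest bookkeeping behind the polynomial factor $\Cadjoint=\Cstate^4+\Cstate^2$. One must verify that each convection term contributes exactly one factor $\|y_h\|_{L^\infty(V)}\le\Cstate$ as asserted, so that the powers accumulate and the lower ones are dominated. More delicate is that the convection coefficient $\tilde y_h$ enters the energy estimate as a factor of order $\Cstate$ and hence sits inside the Gronwall exponent, so the a~priori dependence on $\Cstate$ is exponential rather than polynomial; the passage to the stated polynomial bound requires absorbing this Gronwall factor into the generic, data-dependent constant $c$, exactly as the corresponding factor is hidden in the constant of the continuous counterpart Theorem~\ref{thm:boundedAdjointCont}.
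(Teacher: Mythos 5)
Your architecture --- reverse time as in Lemma~\ref{lem:lipschitzAdjoint}, test with $\tilde p_h$ for the $L^2(V)$ bound, then test with $\RieszIsoAlt\partial_t p_h$ for the $V^*$ bound --- is exactly what the paper intends by its one-line proof (``analogous to Theorem~\ref{thm:boundedAdjointCont}'', which itself defers to standard linear parabolic theory). The genuine gap lies in the bookkeeping that is the whole content of the stated estimate, namely the polynomial factor $\Cadjoint$. Your step ``a Young split \dots and Gronwall's lemma \dots then yields $\|p_h\|_{L^2(V)}\le c\,\Cstate^2\,\|y_h(T)-y_d\|_H$'' is false as written: inserting $\|\tilde y_h(t)\|_{L^\infty}\le c\|y_h\|_{L^\infty(V)}\le c\,\Cstate$ puts $\Cstate^2$ \emph{into the Gronwall exponent}, so what Gronwall actually delivers is
\begin{align*}
 \|p_h\|_{L^2(V)} \le c\bigl(1+\Cstate e^{c\,\Cstate^2 T}\bigr)\|y_h(T)-y_d\|_H ,
\end{align*}
which is exponential, not quadratic, in $\Cstate$. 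Your proposed repair --- absorbing $e^{c\,\Cstate^2 T}$ into the generic constant $c$ --- is not available: for the theorem to have any content (and for the way $\Cadjoint$ is used in the subsequent lemma, where $\Cstate=\Cstate(u_h)$ varies with the minimiser), $c$ must be independent of the data $u,y_0$ entering $\Cstate$; a ``constant'' that swallows $e^{c\,\Cstate^2}$ is not of the form $c(\Cstate^4+\Cstate^2)$, and with that reading the explicit polynomial factor would be vacuous.

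The missing idea is to be less generous in the convection term, exactly as in the proof of Theorem~\ref{thm:boundedForward}: estimate $|\<\tilde y_h\partial_x\tilde p_h,\tilde p_h\>_H|\le \|\tilde y_h\|_{L^\infty}\|\tilde p_h\|_V\|\tilde p_h\|_H$ and keep the Gronwall exponent as $\frac{c}{\nu}\int_0^T\|y_h(t)\|_{L^\infty}^2\,dt=\frac{c}{\nu}\|y_h\|_{L^2(L^\infty)}^2$, which by the discrete analogue of \eqref{equ:yL2Vest} is bounded by $cC\bigl(\|u\|_{L^2(0,T)}^2+\|y_0\|_H^2\bigr)$ --- \emph{not} by $c\,\Cstate^2$. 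Then the Gronwall factor is
\begin{align*}
 \exp\Bigl(cC\bigl(\|u\|_{L^2(0,T)}^2+\|y_0\|_H^2\bigr)\Bigr)
 =\Bigl[\exp\Bigl(C\bigl(\|u\|_{L^2(0,T)}^2+\|y_0\|_H^2\bigr)\Bigr)\Bigr]^{c},
\end{align*}
and since by its very definition $\Cstate$ contains the factor $\exp\bigl(C(\|u\|_{L^2(0,T)}^2+\|y_0\|_H^2)\bigr)$, this Gronwall factor is dominated (up to harmless normalisations of the generic constants) by a \emph{fixed power} of $\Cstate$. That is the only mechanism by which a polynomial expression such as $\Cstate^4+\Cstate^2$ can legitimately emerge; combined with your (correct) observation that the $V^*$ estimate contributes one further factor $c\,\Cstate+\nu$, it produces the claimed exponents. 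Without this step, your argument proves only the weaker bound with an exponential factor, i.e.\ a different statement than the theorem claims.
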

\begin{proof}
 Since the supporting points are given, the proof is analogous to the theorem \ref{thm:boundedAdjointCont}.
 
\end{proof}

Now we state the difference of the adjoint equations derived in section
\ref{sec:contOptimisation} with the one obtained above, in particular
\eqref{equ:adjoint} and \eqref{equ:adjointEquationDiscr}, for a fixed $y\in W(V)\cap L^\infty(V)$.

\begin{Theorem}
 \label{thm:convergenceAdjoints}
 Let $y\in W(V)\cap L^\infty(V)$ be a given function with $\|y\|_{L^\infty(L^\infty)}\le C$.
 Let $p\in W(H^m)$, $m\ge 1$, be the solution to \eqref{equ:adjoint} and $p_h\in W(V)$ the solution to
 \eqref{equ:adjointEquationDiscr} with respect to $y$. Then the estimate
 \begin{align*}
  \|p-p_h\|_{W(V)} \le C(\errorH)
 \end{align*}
 holds.
\end{Theorem}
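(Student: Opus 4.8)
The plan is to mirror the convergence proof for the forward problem, Theorem \ref{thm:convergenceForward}, since the adjoint is a backward parabolic equation with the same principal part and a \emph{linear} advection term whose coefficient $y$ is identical in the continuous equation \eqref{equ:adjoint} and the discrete equation \eqref{equ:adjointEquationDiscr} (both taken with respect to the same $y$). First I would reverse time exactly as in Lemma \ref{lem:lipschitzAdjoint}, setting $\tilde p(t)=p(T-t)$, $\tilde p_h(t)=p_h(T-t)$ and $\tilde y(t)=y(T-t)$, which turns both adjoint equations into forward parabolic problems with initial data at $t=0$ prescribed by the final conditions. Subtracting the two reversed weak formulations, the difference $z:=\tilde p-\tilde p_h$ satisfies the Galerkin-type orthogonality
\begin{align*}
 \<\partial_t z(t),\phi_h\>_{V^*,V} - \<\tilde y(t)\partial_x z(t),\phi_h\>_H + \nu\<\partial_x z(t),\partial_x\phi_h\>_H = 0
\end{align*}
for every test function $\phi_h$ in the particle space, with $z(0)=p(T)-p_h(T)$ the only inhomogeneity.

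As in Theorem \ref{thm:convergenceForward} I would establish two estimates. For the $L^2(V)$ bound I would insert the admissible test function $\phi_h=\Pi^h_\eps\tilde p-\tilde p_h$, written as $\phi_h=\phi_I+z$ with the interpolation residual $\phi_I:=\Pi^h_\eps\tilde p-\tilde p$. Substituting into the orthogonality relation and using $\<\partial_t z,z\>_{V^*,V}=\tfrac12\tfrac{d}{dt}\|z\|_H^2$ together with $\nu\|\partial_x z\|_H^2=\nu\|z\|_V^2-\nu\|z\|_H^2$ gives an energy identity in which the advection contributions $\<\tilde y\partial_x z,z\>_H$ and $\<\tilde y\partial_x z,\phi_I\>_H$ are bounded by $\|\tilde y\|_{L^\infty}\|z\|_V(\|z\|_H+\|\phi_I\|_H)$ and then split by Young's inequality; here the hypothesis $\|y\|_{L^\infty(L^\infty)}\le C$ is exactly what replaces any integration by parts of the advection term. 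After absorbing the resulting $\|z\|_V^2$ terms into the coercive term $\nu\|z\|_V^2$, integrating over $(0,T)$ and applying Gronwall's lemma, I expect
\begin{align*}
 \|z\|_{L^2(V)}\le C\big(\|\phi_I\|_{L^2(V)}+\|z(0)\|_H\big).
\end{align*}

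For the $\|\partial_t z\|_{L^2(V^*)}$ bound I would proceed as in the first part of the proof of Theorem \ref{thm:convergenceForward}: test the orthogonality relation with the particle-space function $\Pi^h_\eps\RieszIsoAlt\partial_t z$, write it as $\phi_I'+\RieszIsoAlt\partial_t z$ with $\phi_I':=\Pi^h_\eps\RieszIsoAlt\partial_t z-\RieszIsoAlt\partial_t z$, and exploit $\<\partial_t z,\RieszIsoAlt\partial_t z\>_{V^*,V}=\|\partial_t z\|_{V^*}^2$ to obtain, after Young's inequality, $\|\partial_t z\|_{L^2(V^*)}\le C(\|\phi_I\|_{L^2(V)}+\|z\|_{L^2(V)})$. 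Combining the two estimates eliminates the $\|\partial_t z\|_{L^2(V^*)}$ contribution from the $L^2(V)$ bound. It then remains to control the two data: the interpolation residual $\|\phi_I\|_{L^2(V)}=\|\Pi^h_\eps p-p\|_{L^2(V)}\le C(\errorH)$ follows from Theorem \ref{thm:interpolationError} with $s=1$ and the regularity $p\in W(H^m)$, while the initial error $\|z(0)\|_H=\|p(T)-p_h(T)\|_H\le C(\errorL)\le C(\errorH)$ follows from the same theorem with $s=0$, using $p_h(T)=\Pi^h_\eps(y_d-y(T))$ and $\eps\le 1$. Since the time reversal preserves both the $L^2(V)$ and the $L^2(V^*)$ norms, adding the two estimates yields $\|p-p_h\|_{W(V)}=\|z\|_{W(V)}\le C(\errorH)$.

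I expect the main obstacle to be the $\|\partial_t z\|_{L^2(V^*)}$ estimate, where one must argue that $\Pi^h_\eps\RieszIsoAlt\partial_t z$ is an admissible particle-space test function and carefully bookkeep the cross terms between $\phi_I'$, the advection term and $\RieszIsoAlt\partial_t z$, so that every interpolation residual is measured in $L^2(V)$ and carries the exponent $m+1$ demanded by \errorH rather than the weaker $m$ of \errorL. A secondary point is checking that $p\in W(H^m)$ supplies enough regularity of the final datum $p(T)$ to invoke Theorem \ref{thm:interpolationError} for $\|z(0)\|_H$.
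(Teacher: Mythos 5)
Your proposal is correct and follows exactly the route the paper intends: the published proof consists of the single remark ``Analogous to the proof of Theorem \ref{thm:convergenceForward},'' and your plan is precisely that analogy carried out in detail (time reversal as in Lemma \ref{lem:lipschitzAdjoint}, the decomposition $\phi_h=\phi_I+z$, the Riesz-isomorphism test for the $L^2(V^*)$ bound, Gronwall, and Theorem \ref{thm:interpolationError} for the residual and final-datum terms). Your observation that the linearity of the adjoint lets $\|y\|_{L^\infty(L^\infty)}\le C$ replace the quadratic-term integration by parts is the right adaptation and is consistent with the paper's intent.
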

\begin{proof}
 Analogous to the proof of theorem \ref{thm:convergenceForward}.
\end{proof}

Finally, we show the convergence of the optimal control function obtained by the analytical
optimisation \minCont, denoted by $u\in U_{ad}$ and the numerical one \minDiscr, denoted by $u_h\in U_{ad}$.
\begin{Lemma}
 Let $u\in U_{ad}$ be a solution to \minCont and
 $u_h\in U_{ad}$ be a solution to \minDiscr. Then there exists $C>0$ independent of $h$ such that
 \begin{align*}
  \|u\|_{H^1(0,T)} \le C
  \qquad\mbox{ and }\qquad
  \|u_h\|_{H^1(0,T)} \le C
 \end{align*}
 holds.
\end{Lemma}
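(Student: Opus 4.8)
The plan is to exploit the structure of the cost functional $J$, whose Tikhonov term $\frac{\sigma}{2}\|u\|_{H^1(0,T)}^2$ directly controls the control norm, together with the optimality of $u$ and $u_h$ against a cheap reference control. First I would fix a reference control $\tilde u \in U_{ad}$: since any constant $c_0$ with $u_l \le c_0 \le u_u$ lies in $U_{ad}$ (and such a $c_0$ exists because $u_l \le u_u$), I choose $\tilde u \equiv c_0$, so that $\|\tilde u\|_{H^1(0,T)}^2 = c_0^2 T$ is a fixed, $h$-independent number with vanishing time derivative.

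For the continuous problem, let $\tilde y = y(\tilde u)$ be the state associated to $\tilde u$. By theorem \ref{thm:boundedForward} we have $\|\tilde y\|_{W(V)} \le \Cstate$ with a constant depending only on $\nu$, $T$, $\locFunc$, $\|\tilde u\|_{L^2(0,T)}$ and $\|y_0\|$, all of which are fixed. Using the embedding $W(V) \hookrightarrow C^0(H)$ gives $\|\tilde y(T) - y_d\|_H \le C\Cstate + \|y_d\|_H =: M_1$, so that $J(\tilde y, \tilde u) \le \frac12 M_1^2 + \frac{\sigma}{2} c_0^2 T =: M$ is finite and independent of $h$. Since $u$ minimises \minCont and $J \ge 0$, optimality yields
\begin{align*}
 \frac{\sigma}{2}\|u\|_{H^1(0,T)}^2 \le J(y(u),u) \le J(\tilde y,\tilde u) \le M,
\end{align*}
whence $\|u\|_{H^1(0,T)} \le \sqrt{2M/\sigma} =: C$.

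For the discrete problem I would repeat the argument verbatim, replacing $y(\tilde u)$ by the discrete state $\tilde y_h = y_h(\tilde u)$ and theorem \ref{thm:boundedForward} by theorem \ref{thm:boundedForwardDiscr}. The crucial point is that the latter provides the bound $\|\tilde y_h\|_{W(V)} \le \Cstate$ with exactly the same constant $\Cstate$, which does not depend on the discretisation parameters $h$ and $\eps$. Consequently $J(\tilde y_h, \tilde u) \le M$ with the same $M$ as above, and optimality of $u_h$ for \minDiscr gives $\frac{\sigma}{2}\|u_h\|_{H^1(0,T)}^2 \le M$, i.e. $\|u_h\|_{H^1(0,T)} \le C$ with the same $h$-independent constant.

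The argument is essentially routine; the only point requiring care is the uniformity in $h$. This is secured entirely by the observation that the a priori bound in theorem \ref{thm:boundedForwardDiscr} holds with a constant $\Cstate$ independent of $h$ and $\eps$, which in turn is why the reference control $\tilde u$ must be chosen independently of the discretisation. Once this is noted, both estimates follow from the same comparison inequality and the nonnegativity of the tracking term in $J$.
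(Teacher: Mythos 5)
Your proof is correct, but it follows a genuinely different route from the paper. You use the classical feasible-point comparison: fix a constant control $\tilde u\equiv c_0\in U_{ad}$, bound $J(\tilde y,\tilde u)$ (resp.\ $J(\tilde y_h,\tilde u)$) via the a priori state estimates of theorems \ref{thm:boundedForward} and \ref{thm:boundedForwardDiscr}, and then read the bound on $\|u\|_{H^1(0,T)}$ and $\|u_h\|_{H^1(0,T)}$ directly off the Tikhonov term, using only $J\ge \frac{\sigma}{2}\|\cdot\|^2_{H^1(0,T)}$ and optimality. The paper instead goes through the first-order optimality system: it writes $u_h=\projection(s)$ with $s=\RieszIsoAlt\frac1\sigma\calB^*p_h$, bounds $\|s\|_{H^1(0,T)}$ using the discrete adjoint estimate of theorem \ref{thm:boundedAdjointDiscr} together with theorem \ref{thm:boundedForwardDiscr}, notes that $\Cstate$ is $h$-independent because $\|u_h\|_{L^2(0,T)}$ is controlled by the $L^\infty$ bounds defining $U_{ad}$, and concludes via Lipschitz continuity of the projection. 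Your argument is more elementary and in some ways more robust: it needs neither the adjoint equation, nor the projection formula, nor any first-order condition, so it applies to any global minimiser; its only discretisation-dependent ingredient is the uniformity of $\Cstate$ for the \emph{fixed} reference control $\tilde u$, which you correctly isolate (and the existence of $y_h(\tilde u)$, which is the paper's proposition \ref{asp:existenceForwardDiscr}). The paper's approach costs more machinery but builds exactly the objects ($s$, $s_h$, the projection identity, the adjoint bounds) that are reused immediately afterwards in the convergence proof of lemma \ref{lem:strongConvergence}, so its overhead is amortised. One stylistic point: the paper only proves the bound for $u_h$, remarking that the continuous case is analogous, whereas your argument handles both cases symmetrically with the same constant $M$, which is a small but genuine improvement in completeness.
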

\begin{proof}
 We only show $\|u_h\|_{H^1(0,T)} \le C$. The first order optimality condition yields
 \begin{align*}
  s := (\RieszIsoAlt) \frac1\sigma \calB^* p_h
  \qquad\mbox{and}\qquad
  u_h = \projection(s)
 \end{align*}
 for a minimum $(y_h,\Phi,p_h,u_h)$. Hence we get
 \begin{align*}
  \| s \|_{H^1(0,T)}^2 &= \<s,s\>_{H^1(0,T)}
   =  \< \RieszIsoAlt \tfrac1\sigma \calB^* p_h, s\>_{H^1(0,T)}
   = \frac1\sigma \< \calB^* p_h, s\>_{H^1(0,T)^*,H^1(0,T)} \\
  &= \frac1\sigma \<\calB s, p_h\>_{L^2(V^*),L^2(V)}
   \le C \|s\|_{L^2(0,T)} \|p_h\|_{L^2(V)} \\
  &\le C \|s\|_{H^1(0,T)} \Cadjoint \|y_h(T)-y_d\|_H\\
  &\le C \|s\|_{H^1(0,T)} \Cadjoint ( \|y_h(T)\|_H + \|y_d\|_H )
 \end{align*}
 due to theorem \ref{thm:boundedAdjointDiscr} and the embedding $H^1(0,T)\hookrightarrow L^2(0,T)$.
 As a consequence of theorem \ref{thm:boundedForwardDiscr} we obtain
 \begin{align*}
  \| s \|_{H^1(0,T)} \le C \Cadjoint ( \Cstate + \|y_d\|_H )
 \end{align*}
 Note that
 \begin{align*}
  \Cstate = C(\nu, T, \chi_c) ( \|u_h\|_{L^2(0,T)} + \|y_0\|_V ) \exp( C(\|u_h\|_{L^2(0,T)}^2 + \|y_0\|_H^2) )
 \end{align*}
 is independent of $h$ and since $u_h=\projection(s)$ is bounded, i.e. $\|u_h\|_{L^\infty(0,T)}\le C$, we obtain
 \begin{align*}
  \|s\|_{H^1(0,T)} \le C
 \end{align*}
 Using the fact that $\projection$ is continuous, we obtain
 \begin{align*}
  \|u_h\|_{H^1(0,T)}  = \|\projection(s)\|_{H^1(0,T)} \le c\|s\|_{H^1(0,T)} \le C
 \end{align*}
 which proves the assumption. 
\end{proof}

\begin{Lemma}
 \label{lem:strongConvergence}
 Let $u\in U_{ad}$ be a solution to \minCont and $u_h\in U_{ad}$ to \minDiscr. Then there exists a subsequence $u_{h_n}$ of $u_h$ such that
 \begin{align*}
  u_{h_n}\to u \qquad\mbox{in }H^1(0,T) \qquad\mbox{as } n\to \infty
 \end{align*}
 holds.
\end{Lemma}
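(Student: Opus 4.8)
The plan is to establish strong convergence of the discrete optimal controls $u_h$ to the continuous optimal control $u$ in $H^1(0,T)$, exploiting the uniform boundedness from the previous lemma together with the convergence rates already proven for the state and adjoint systems.

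First I would extract a weakly convergent subsequence. Since the previous lemma gives $\|u_h\|_{H^1(0,T)} \le C$ uniformly in $h$, by reflexivity of $H^1(0,T)$ there exists a subsequence $u_{h_n} \rightharpoonup \tilde u$ in $H^1(0,T)$, and by compactness of the embedding $H^1(0,T)\hookrightarrow L^2(0,T)$ we also get $u_{h_n}\to \tilde u$ strongly in $L^2(0,T)$, with $\tilde u\in U_{ad}$ since $U_{ad}$ is weakly closed. The key structural fact is that each optimal control satisfies the projection formula from the first order condition, namely $u_h = \projection\big((\RieszIsoAlt)\tfrac1\sigma \calB^* p_h\big)$, where $p_h$ solves the discrete adjoint equation with respect to $y_h(u_h)$, and analogously $u = \projection\big((\RieszIsoAlt)\tfrac1\sigma \calB^* p\big)$ with $p$ solving the continuous adjoint.

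The heart of the argument is to pass to the limit in this fixed-point relation. I would show that the adjoint states $p_{h_n}$ converge strongly to the continuous adjoint $p$ associated with $\tilde u$. This requires two ingredients: first, that the states $y_{h_n}(u_{h_n})$ converge to the continuous state $y(\tilde u)$, which follows by combining the convergence rate $\|y-y_h\|_{W(V)}\le C(\errorH)$ from theorem \ref{thm:convergenceForward} with the Lipschitz dependence of the state on the control; second, that the adjoint depends continuously on both the underlying state and the discretisation, via theorem \ref{thm:convergenceAdjoints} and the Lipschitz estimate of lemma \ref{lem:lipschitzAdjoint}. Together these give $p_{h_n}\to p(\tilde u)$ in $L^2(V)$ as $n\to\infty$ and $h_n\to 0$. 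Since $\calB^*$ is bounded and $\projection$ is continuous, passing to the limit in the projection identity yields $\tilde u = \projection\big((\RieszIsoAlt)\tfrac1\sigma \calB^* p(\tilde u)\big)$, so $\tilde u$ satisfies the continuous first order optimality condition and hence $\tilde u = u$.

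The main obstacle will be upgrading the weak $H^1$ convergence to strong $H^1$ convergence. Weak convergence alone does not control $\|\partial_t u_{h_n}\|_{L^2}$, so I would close the argument by a norm-convergence trick: using the projection formula and the already established strong convergence $p_{h_n}\to p$ in $L^2(V)$, the quantity $s_{h_n}:=(\RieszIsoAlt)\tfrac1\sigma\calB^* p_{h_n}$ converges strongly in $H^1(0,T)$ to $s:=(\RieszIsoAlt)\tfrac1\sigma\calB^* p$ because $\calB^*$ and the Riesz isomorphism are bounded and $p_{h_n}\to p$ controls the right-hand side; then continuity of $\projection$ in $H^1(0,T)$ (invoked in the previous lemma) transfers this strong convergence to $u_{h_n}=\projection(s_{h_n})\to\projection(s)=u$ in $H^1(0,T)$. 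The delicate point is ensuring $\calB^* p_{h_n}\to\calB^* p$ in $H^1(0,T)^*$ strongly enough that $s_{h_n}$ converges in the full $H^1$ norm rather than merely weakly, which hinges on the strong $L^2(V)$ convergence of the adjoints established above.
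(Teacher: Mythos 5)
Your proposal follows essentially the same route as the paper's own proof: extraction of a weakly $H^1$/strongly $L^2$ convergent subsequence with $\tilde u\in U_{ad}$, the splittings $\|y(\tilde u)-y_h(u_h)\|\le\|y(\tilde u)-y_h(\tilde u)\|+\|y_h(\tilde u)-y_h(u_h)\|$ and $\|p(y)-p_h(y_h)\|\le\|p(y)-p(y_h)\|+\|p(y_h)-p_h(y_h)\|$ via theorems \ref{thm:convergenceForward} and \ref{thm:convergenceAdjoints} and lemma \ref{lem:lipschitzAdjoint}, and finally the upgrade to strong $H^1$ convergence through the projection formula $u_h=\projection(s_h)$ with $\|\tilde s-s_h\|_{H^1(0,T)}\le C\|p(y(\tilde u))-p_h(y_h(u_h))\|_{L^2(V)}$ and Lipschitz continuity of $\projection$. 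The argument and its key ingredients coincide with the paper's proof, so the proposal is correct as it stands.
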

\begin{proof}
 We define the sequence $(y_h,p_h,u_h)\in W(V)\times W(V) \times U_{ad}$ such that it is
 a solution to \minDiscr.
 Since $u_h\in U_{ad}$ is bounded in $H^1(0,T)$ there exists a converging subsequence
 \begin{align*}
  u_h \rightharpoonup \tilde u,
  \quad h\to 0
  \qquad \mbox{in } H^1(0,T)
 \end{align*}
 As the embedding $H^1(0,T)\hookrightarrow L^2(0,T)$ is compact, we obtain a subsequence
 \begin{align}
  \label{equ:uhtouL2}
  u_h \to \tilde u,
  \quad h\to 0
  \qquad \mbox{in } L^2(0,T)
 \end{align}
 Since $U_{ad}$ is weakly closed, also $\tilde u\in U_{ad}$.
 Due to theorem \ref{thm:convergenceForward} we get
 \begin{align*}
  \|y(u)-y_h(u)\|_{W(V)} \to 0 \qquad\mbox{as } h\to 0
 \end{align*}
 where $y(u)$ denotes the solution of $e(y,u)=0$ and $y_h(u)$ to $e_h(y_h,\Phi_h,u)=0$.
 Hence, we obtain
 \begin{align*}
  \|y(\tilde u) - y_h(u_h)\|_{W(V)} &\le 
   \underbrace{\|y(\tilde u)-y_h(\tilde u)\|_{W(V)}}_{\to 0}
   + \underbrace{\|y_h(\tilde u)-y_h(u_h)\|_{W(V)}}_{\to 0}
 \end{align*}
 due to the continuity of the solution operator $y_h(u)$ and \eqref{equ:uhtouL2}.
 The same holds true for $p$, i.e.
 \begin{align*}
  \|p(y)-p_h(y)\|_{W(V)} \to 0 \qquad\mbox{as } h\to 0
 \end{align*} 
 Hence,
 \begin{align*}
  \|p(y) - p_h(y_h)\|_{L^2(V)} &\le 
   \underbrace{\|p(y)-p(y_h)\|_{L^2(V)}}_{\to 0}
   + \underbrace{\|p(y_h)-p_h(y_h)\|_{L^2(V)}}_{\to 0}
 \end{align*} 
 due to lemma \ref{lem:lipschitzAdjoint}.
 
 \noindent Combining the above results yields
 \begin{align*}
  \|p(y(\tilde u)) - p_h(y_h(u_h) \|_{L^2(V)} \to 0 \qquad\mbox{as }h\to 0
 \end{align*}
 We define
 \begin{align*}
  s_h &:= (\RieszIsoAlt) \tfrac1\sigma \calB^* p_h(y_h(u_h))
  \hspace{20mm} &u_h &= \projection(s_h) \\
  \tilde s &:= (\RieszIsoAlt) \tfrac1\sigma \calB^* p(y(\tilde u))
  &\tilde u &= \projection(\tilde s)
 \end{align*}
 we obtain
 \begin{align*}
  \|\tilde s - s_h\|_{H^1(0,T)}^2 
   &= \frac1\sigma \< (\RieszIsoAlt) \calB^*\big( p(y(\tilde u))
    -p_h(y_h(u_h))\big),\tilde s-s_h\>_{H^1(0,T)} \\
  &= \frac1\sigma \< \calB^*(p(y(\tilde u))
   -p_h(y_h(u_h))), \tilde s-s_h\>_{H^1(0,T)^*,H^1(0,T)} \\
  &= \frac1\sigma \< \calB(\tilde s-s_h), p(y(\tilde u))
   -p_h(y_h(u_h))\>_{L^2(V^*),L^2(V)} \\
  &\le C \|p(y(\tilde u))-p_h(y_h(u_h))\|_{L^2(V)} \| \tilde s-s_h\|_{L^2(0,T)}
 \end{align*}
 and therefore
 \begin{align*}
  \|\tilde s - s_h\|_{H^1(0,T)} \le C \|p(y(\tilde u))-p_h(y_h(u_h))\|_{L^2(V)}
  \to 0\qquad\mbox{as }h\to 0
 \end{align*}
 Since the projection $\projection$ is Lipschitz continuous, we get
 \begin{align*}
  \|\tilde u-u_h\|_{H^1(0,T)} \le C\|\tilde s-s_h\|_{H^1(0,T)}
 \end{align*}
 and hence $\tilde u = u$ and we get for the defined subsequence
 \begin{align*}
  u_h \to  u \qquad\mbox{in }H^1(0,T)
 \end{align*}
 
\end{proof}

\begin{remark}
 It is possible to show that, satisfying the second order sufficiency condition,
 there exists a $h_0>0$ such that for all  $h<h_0$
   \[ \|u_h-u\|_{H^1(0,T)} \le C\big( \errorH \big) \]
 holds.
 For further details we refer to \cite{MyPhd}.
\end{remark}

\section{Numerical Results}
In this section we verify the derived convergence rates numerically. 
The setting used is $\nu=1, T=1, u_l=0, u_u=100$ and $y(0)\equiv 0$. The cost functional is 
\[ J(y,u) = \frac12 \|y(1)-10\exp(-2x^2)\|_H^2 + \frac{0.05}{2} \|u\|_{H^1(0,1)}^2. \]
and the localisation function $\chi_c(x) = \exp(-5x^2)$.
Moreover, the number of time steps is set to $N_t=500$, which is large enough to avoid significant errors due to time integration.
To get a reference solution, we perform the optimisation of the viscous Burger's equation on a fine fixed grid ($h\simeq 10^{-3}$) and denote the solution by $(y,p,u)$ in the following. Then the particle solutions are evaluated by using a steepest descent algorithm with Armijo rule, see e.g. \cite{Kelly}.
The $\delta_\eps$-function for the interpolation operator $\Pi_\eps^h$ is given by
\begin{align*}
 \delta_\eps(x) := \frac{1}{\sqrt{\pi}\eps} \exp(-\eps^{-2}x^2)
\end{align*}
which satisfies $r=2$ for the momentum stated in lemma \ref{lem:convolutionEstimate}.

%

We start with an optimisation for $\eps=0.3$ and $h=0.1$. The results, see figure~\ref{fig:result1}, show a good convergence rate. As expected, we have a steeper decrease of the gradient norm during the first steps, then we get a more or less stable convergence rate of approximately $1.0$. No Armijo step size reductions are needed. The expected final state is qualitatively reached but, due to the regularisation, we only reach a maximal value of $~5$ instead of the expected value $10$.

Then we verify the convergence rate of the continuous optimisation, in particular the reference solution, and the discrete one. For this we first fix the point distance $h$ to $h=0.1$ and vary $\eps$. Hence, we expect an error of
\begin{align*}
 \| u_h-u\|_{H^1(0,T)} , \| y_h-y \|_{L^2(V)} \propto \eps^2 + h^m \eps^{-3}
\end{align*}
where $m$ depends on the continuous solution.
The $H^1(0,T)$ error for $u$ and the $L^2(V)$ error for $y$, see figure~\ref{fig:results2a}, show a good coincidence with the predicted error.

Next, we fix $\eps$ to $\eps=0.1$ and vary the initial point distance $h$. Figure~\ref{fig:results2b} shows again the $H^1(0,T)$ error for $u$ and the $L^2(V)$ error for $y$.
The expected relation
\begin{align*}
 \| u_h-u\|_{H^1(0,T)} , \| y_h-y \|_{L^2(V)} \propto C + h^m
\end{align*}
is satisfied for the $u$-difference quickly, i.e. from $1/h=50$ on we obtain a constant error norm.
The difference of $y$ has more regularity, i.e the $h^m$ term is dominating in contrast to the constant $C$.

Finally, we choose $\eps$ depending on $h$ by the $\eps = h^\frac{1}{2}$. Hence, we expect for $m=2$
\begin{align*}
 \| u_h-u\|_{H^1(0,T)} , \| y_h-y \|_{L^2(V)} \propto h^\frac{1}{2}
\end{align*}
which is validated by figure~\ref{fig:results3}.

\section{Conclusion}
In this paper we derived the optimality system for the minimisation of a cost functional subject to the viscous Burgers equation in the whole space $\mathbb R$. First, the boundedness of the state and adjoint equation was stated. Moreover, we have shown the existence of a minimiser to the continuous optimisation problem and the existence and uniqueness of a solution to the adjoint system. Both systems, the state and adjoint system, were discretised by a particle approximation obtained by Dirac sequence and the corresponding discretisation errors were derived. Further, we proved the existence of a strong converging subsequence of discrete optimal control functions to the continuous optimal control.
Finally, the derived convergence rates are verified numerically.

\bibliography{Literature}

\begin{thebibliography}{10}

\bibitem{Alt}
H.~W. Alt.
\newblock {\em {Lineare Funktionalanalysis}}.
\newblock Springer, {5. Auflage} edition, 2006.

\bibitem{casas06}
E.~Casas and C.C. Urdiales.
\newblock {Optimal Control of PDE Theory and Numerical Analysis}.
\newblock {\em Relation}, 10(1.102):2540, 2006.

\bibitem{Pinnau}
M.~Hinze, R.~Pinnau, M.~Ulbrich, and S.~Ulbrich.
\newblock {\em {Optimization with PDE constraints}}.
\newblock Springer Verlag, 2008.

\bibitem{Kelly}
C.~T. Kelly.
\newblock {\em Iterative Methods for Optimization}.
\newblock {Society for Industrial and Applied Mathematics}, North Carolina,
  1999.

\bibitem{KuhnertDiss}
J.~Kuhnert.
\newblock {\em {General smoothed particle hydrodynamics}}.
\newblock PhD thesis, University of Kaiserslautern, 1999.

\bibitem{Kuhnert1}
J.~Kuhnert.
\newblock Finite pointset method based on the projection method for simulations
  of the incompressible navier-stokes equations.
\newblock {\em {Springer LNCSE: Meshfree methods for Partial Differential
  Equations}}, 26:243--324, 2002.

\bibitem{LiLiu}
S.~Li and W.~Liu.
\newblock {\em Meshfree Particle Methods}.
\newblock Springer, 2007.

\bibitem{existBurger2}
J.~L{\'i}maco, H.R. Clark, and L.A. Medeiros.
\newblock On the viscous burgers equation in unbounded domain.
\newblock {\em {Electronic Journal of Qualitative Theory of Differential
  Equations}}, 18:1--23, 2010.

\bibitem{JMpamm09}
J.~Marburger.
\newblock {On Optimal Control Using Particle Methods}.
\newblock {\em PAMM}, 9(1):605--606, 2009.

\bibitem{JMparticles09}
J.~Marburger.
\newblock {Optimization Using Particle Methods}.
\newblock {\em Particle-Based Methods - Fundamentals and Applications},
  1(1):340--343, 2009.

\bibitem{MyPhd}
J.~Marburger.
\newblock {\em Optimal Control based on Meshfree Approximations}.
\newblock PhD thesis, TU Kaiserslautern, 2010.

\bibitem{JMpamm10}
J.~Marburger.
\newblock {Optimisation of Free Surface Problems Using Particle Methods}.
\newblock {\em PAMM}, 10(1):to appear, 2010.

\bibitem{JM_NM_RP_Basics}
J.~Marburger, N.~Marheineke, and R.~Pinnau.
\newblock Adjoint based optimal control using mesh-less discretizations.
\newblock submitted, 2008.

\bibitem{Raviart2}
S.~Mas-Gallic and P.~A. Raviart.
\newblock A particle method for first-order symmetric systems.
\newblock {\em {Numerische Mathematik}}, 51:323--352, 1987.

\bibitem{monaghan77}
J.J. Monaghan.
\newblock {An Introduction to SPH}.
\newblock {\em Comp. Phy. Comm.}, 48:89--96, 1977.

\bibitem{monaghan05}
J.J. Monaghan.
\newblock {Smoothed particle hydrodynamics}.
\newblock {\em Reports on Progress in Physics}, 68:1703--1759, 2005.

\bibitem{Raviart}
P.~A. Raviart.
\newblock An analysis of particle methods.
\newblock {\em {Numerical Methods in Fluid Dynamics}}, pages 243--324, 1985.

\bibitem{schweitzerDipl}
M.A. Schweitzer.
\newblock {Partikel-Galerkin-Verfahren mit Ansatzfunktionen der Partition of
  Unity Method}.
\newblock {\em Master's thesis, Institut f{\"u}r Angewandte Mathematik,
  Universit{\"a}t Bonn}, 1997.

\bibitem{ShowalterII}
R.~E. Showalter.
\newblock {\em Monotone Operators in Banach Spaces and Nonlinear Partial
  Differential Equations}.
\newblock {American Mathematical Society}, Austin, Texas, 1997.

\bibitem{KuhnertSurfTension}
S.~Tiwari and J.~Kuhnert.
\newblock Modelling of two-phase flows with surface tension by finite pointset
  method.
\newblock {\em Journal of Comp. and Appl. Math.}, 203:376--386, 2007.

\bibitem{Troel}
F.~Troeltzsch.
\newblock {\em {Optimale Steuerung partieller Differentialgleichungen}}.
\newblock Vieweg Verlag, 1st edition, 2005.

\bibitem{volkwein-basics}
S.~Volkwein.
\newblock {Basic Functional Analysis for the Optimization of Partial
  Differential Equations}.

\bibitem{BurgersVolk}
S.~Volkwein.
\newblock Distributed control problems for the burgers equation.
\newblock {\em {Computational Optimization and Applications}}, 18:115--140,
  2001.

\bibitem{ZeidlerIIA}
E.~Zeidler.
\newblock {\em Nonlinear Functional Analysis and its Applications II/A}.
\newblock Springer-Verlag, 1990.

\end{thebibliography}

\begin{figure}[p]
 \hspace{-1cm}
 \subfigure[Optimal state $y$.]{
  \includegraphics[scale=0.7]{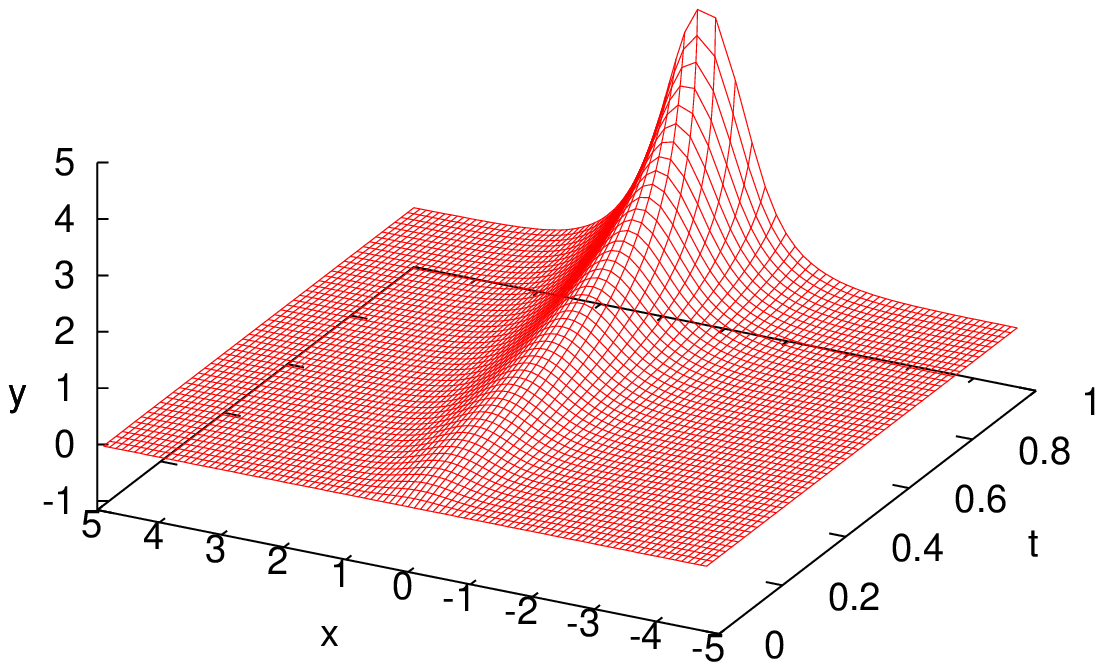}
 }
 \hspace{-5mm}
 \subfigure[Optimal control $u$.]{
  \includegraphics[scale=0.55]{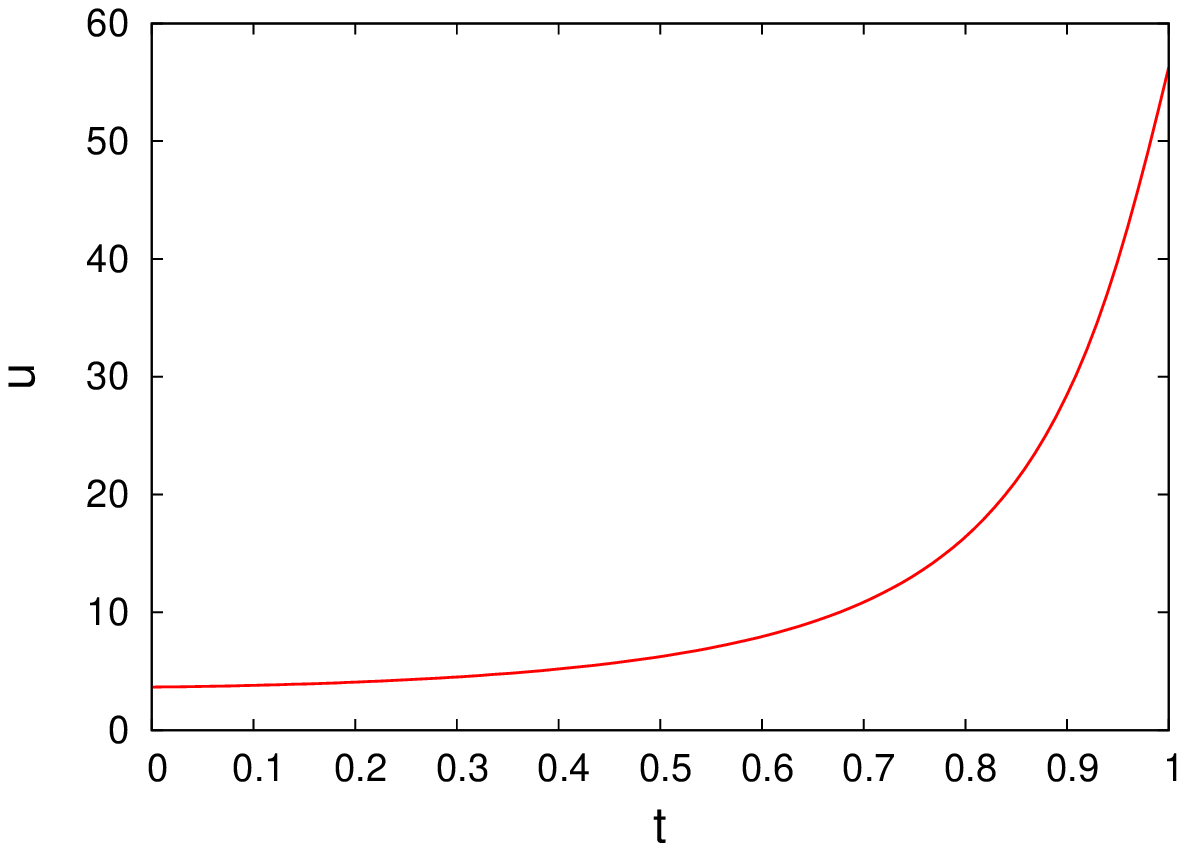}
 }
 \subfigure[Cost function.]{
  \includegraphics[scale=0.55]{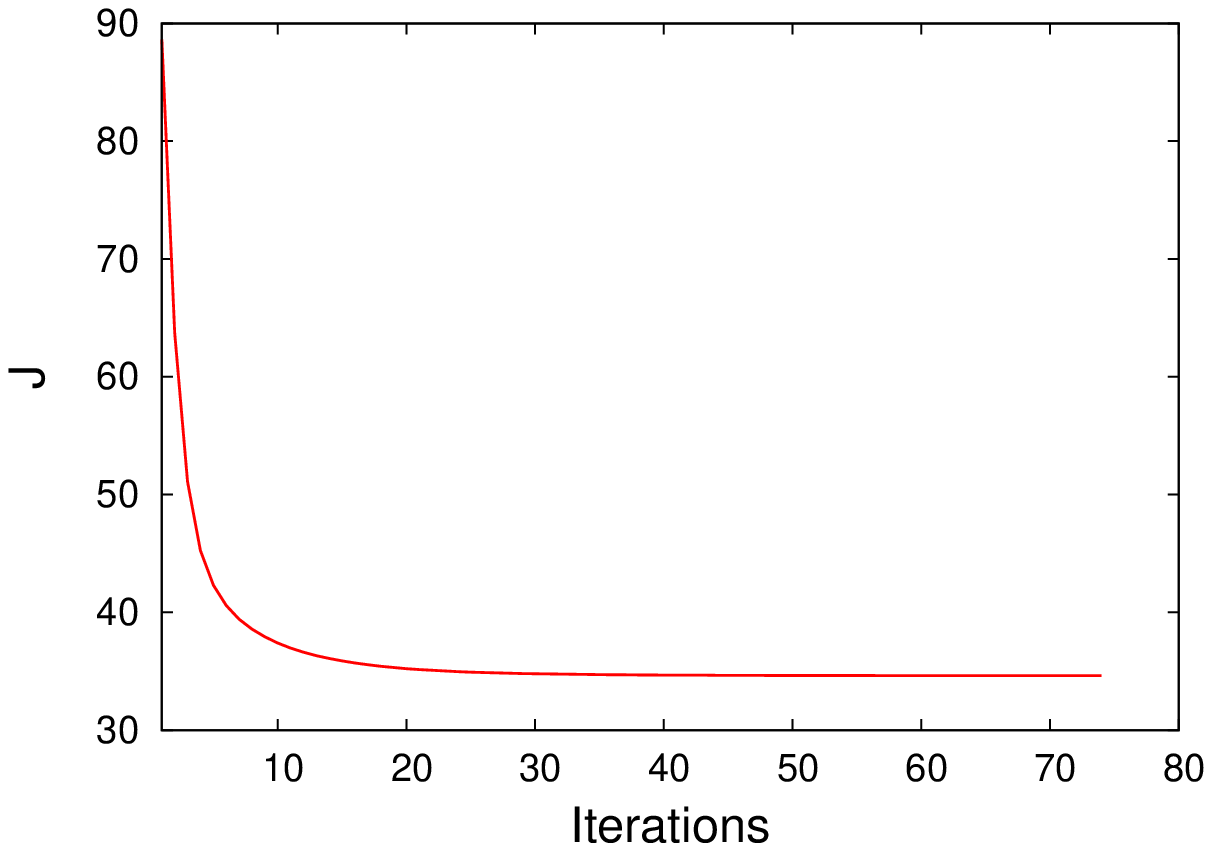}
 }
 \hspace{5mm}
 \subfigure[Relative gradient norm ($H^1(0,T)$-norm).]{
  \includegraphics[scale=0.55]{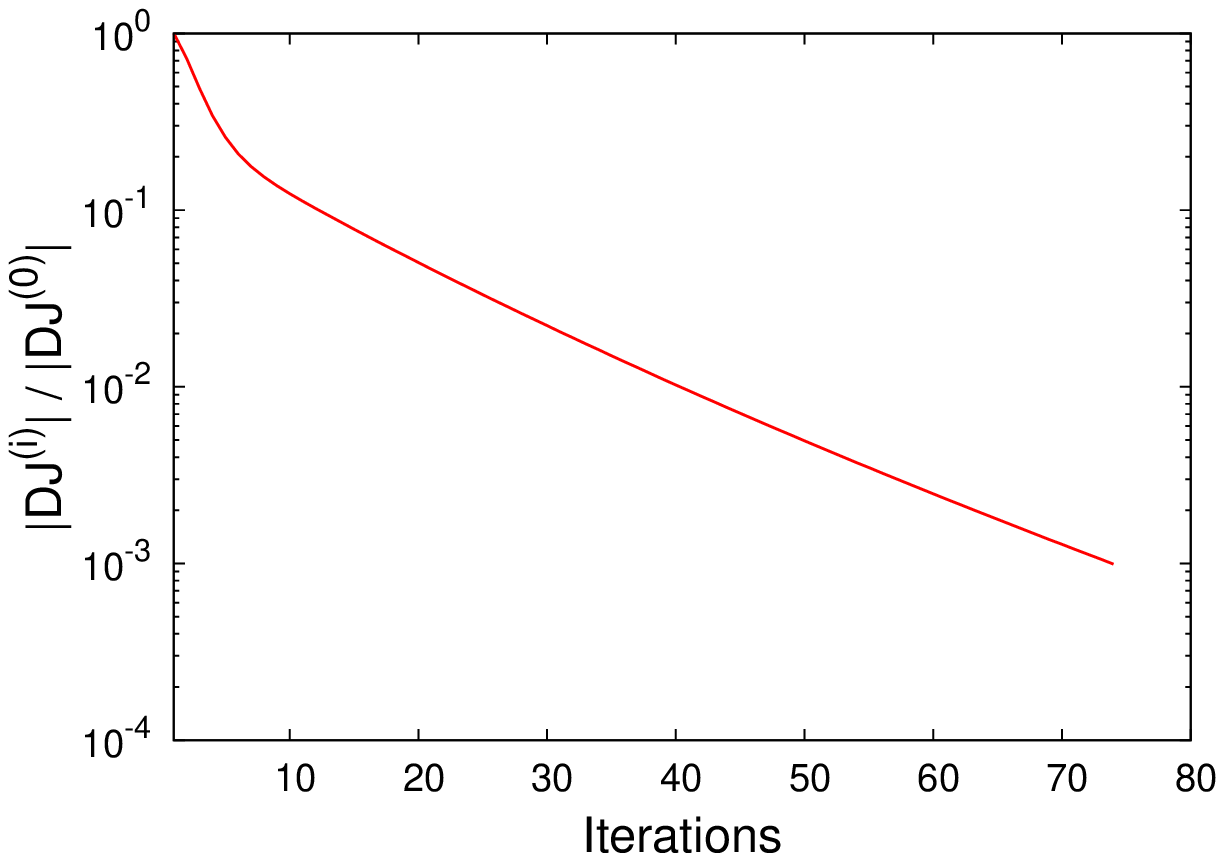}
 }
 \caption{Optimal particle solution.}
 \label{fig:result1}
\end{figure}

\begin{figure}[p]
 \subfigure[\label{fig:results2a}Varying $\epsilon$ for fixed $h=0.1$.]{
  \includegraphics[scale=0.55]{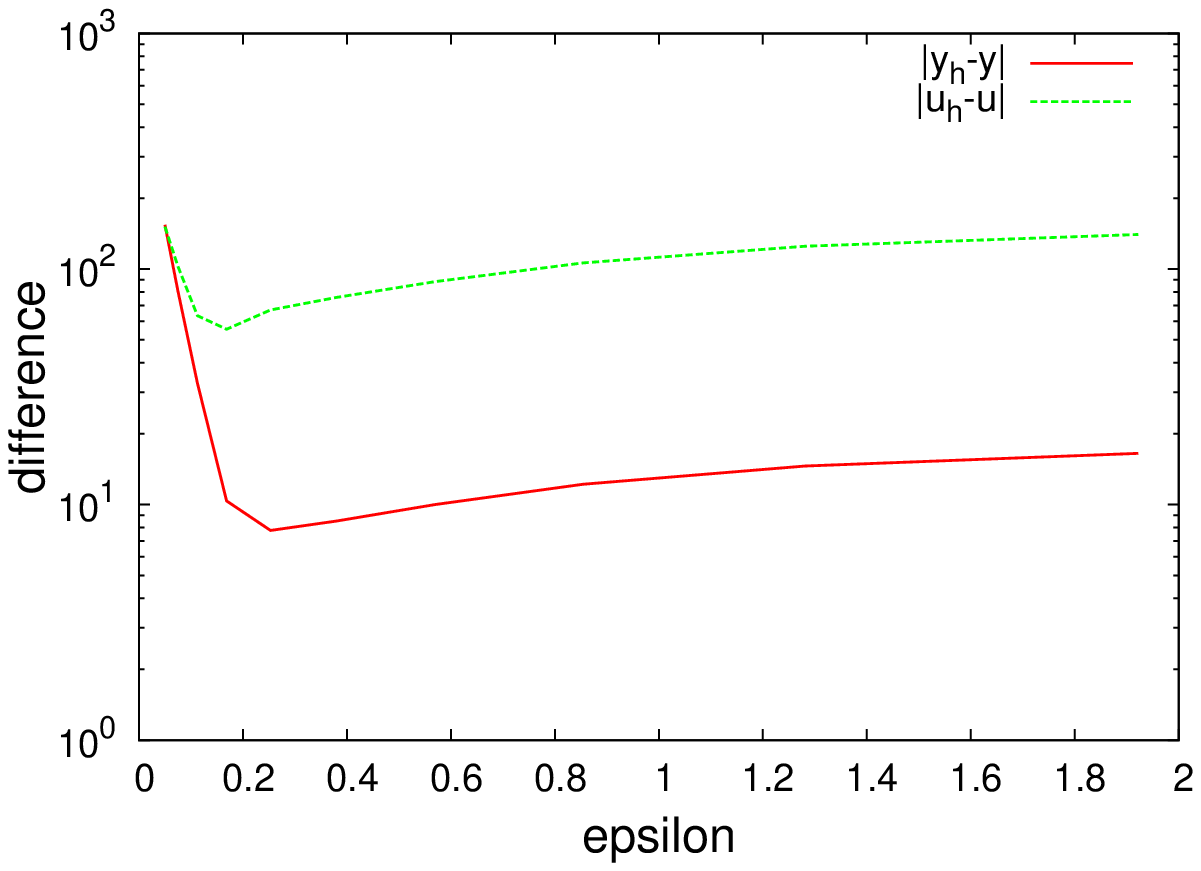}
 } 
 \subfigure[\label{fig:results2b}Varying $h$ for fixed $\eps=0.1$.]{
  \includegraphics[scale=0.55]{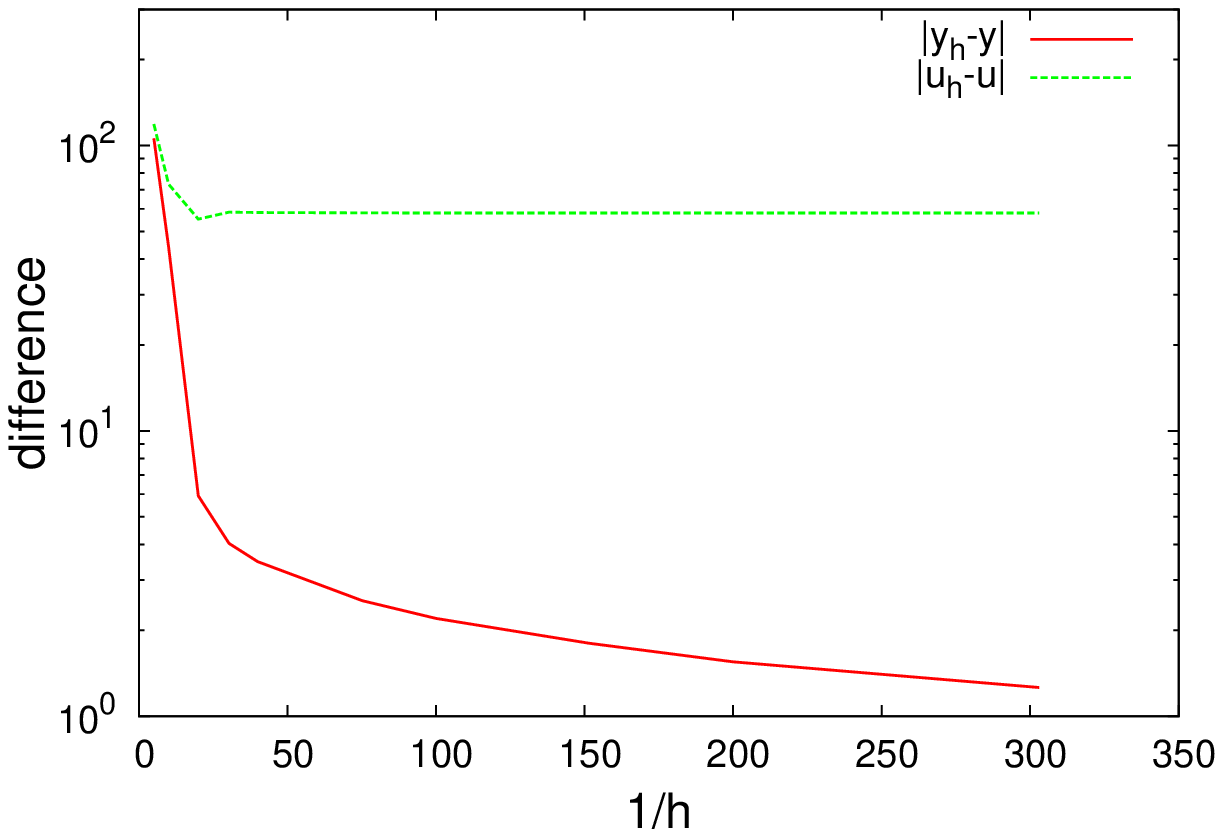}
 }
 \caption{Convergence ($\|y_h-y\|_{L^2(V)}$ and $\|u_h-u\|_{H^1(0,T)}$) for varying $\epsilon$ and $h$.}
\end{figure}

\begin{figure}
 \centering
 \includegraphics[scale=0.55]{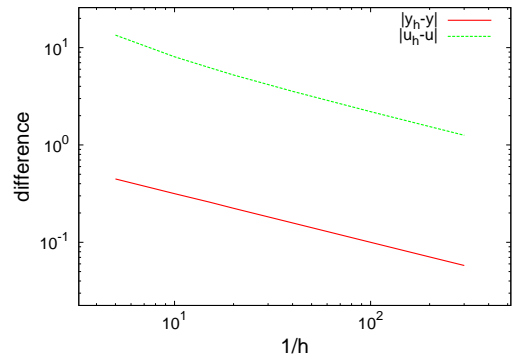}
 \caption{Convergence ($\|y_h-y\|_{L^2(V)}$ and $\|u_h-u\|_{H^1(0,T)}$) for $\eps=h^\frac{1}{2}$.}
 \label{fig:results3}
\end{figure}

\end{document}